\newtheorem{theorem}{Theorem}[section]
\newtheorem{lemma}[theorem]{Lemma}
\theoremstyle{definition}
\newtheorem{definition}[theorem]{Definition}
\theoremstyle{remark}
\numberwithin{equation}{section}
\begin{document}

\setcounter{page}{1}

\title[Behavior of solutions to evolution inequalities]{Behavior of solutions to semilinear evolution inequalities in an annulus: the critical cases}

\author[M. B. Borikhanov, B. T. Torebek]{Meiirkhan B. Borikhanov,  Berikbol T. Torebek}

\address{{Meiirkhan B. Borikhanov \newline Khoja Akhmet Yassawi International Kazakh--Turkish University \newline Sattarkhanov ave., 29, 161200 Turkistan, Kazakhstan  \newline Institute of
Mathematics and Mathematical Modeling\newline 125 Pushkin str., 050010 Almaty, Kazakhstan}}
\email{meiirkhan.borikhanov@ayu.edu.kz, borikhanov@math.kz}

\address{{Berikbol T. Torebek \newline Institute of
Mathematics and Mathematical Modeling \newline 125 Pushkin str.,
050010 Almaty, Kazakhstan \newline Department of Mathematics: Analysis, Logic and Discrete Mathematics \newline
Ghent University, Belgium}}
\email{{torebek@math.kz, berikbol.torebek@ugent.be}}


\let\thefootnote\relax\footnote{$^{*}$Corresponding author}

\subjclass[2020]{35K70, 35A01, 35B44.}

\keywords{parabolic equations, hyperbolic equations, critical exponent, global solutions.}

\begin{abstract} In the present paper, we consider the parabolic and hyperbolic inequalities with a singular potentials and with a critical nonlinearities in the annulus domain. The problems are studied with Neumann-type and Dirichlet-type boundary conditions on the boundary. Moreover, we study the systems of problems too. We have proved that the above problems are globally unsolvable in critical cases, thereby filling the gaps the recent results by Jleli and Samet in [J. Math. Anal. Appl. 514: 2 (2022)] and in [Anal. Math. Phys. 12: 90 (2022)]. Proofs are carried out using the method of test functions with logarithmic arguments, which is being developed for the first time in bounded domains.
\end{abstract}
\maketitle
\tableofcontents

\section{Introduction}
Recently, Jleli et al. in \cite{Jleli} have studied the
nonexistence of global weak solutions to systems of semilinear
parabolic inequalities
\begin{equation}\label{SP1}\left\{\begin{array}{l}
u_t-\Delta u\geq(|x|-1)^{-\rho}|v|^{q},\,\ (t,x)\in(0,T)\times A={Q_T},\\{}\\
v_t-\Delta v\geq(|x|-1)^{-\rho}|u|^{p},\,\ (t,x)\in(0,T)\times A={Q_T} \end{array}\right.\end{equation}
and a single inequality of the form
\begin{equation}\label{P1}
u_t-\Delta u\geq(|x|-1)^{-\rho}|u|^{p},\,\ (t,x)\in(0,T)\times A={Q_T},\end{equation}
where $A=\{x\in \mathbb{R}^N: 1<|x|\leq2\}$, $N\geq2$, $\rho>0$  and $p,q>1.$

Problem \eqref{SP1} considered under two types of inhomogeneous boundary conditions:
\\$\bullet$ the Neumann type
\begin{equation}\label{SP3}
\frac{\partial u}{\partial \nu}(t,x)\geq f(x), \,\,\frac{\partial v}{\partial \nu}(t,x)\geq g(x)\,\,\,\,\text{on}\,\,\,\,\ (t,x)\in(0,T)\times \partial B_2=\Gamma,\end{equation}
\\$\bullet$ the Dirichlet type
\begin{equation}\label{SP4}
u(t,x)\geq f(x),\,\,\, v(t,x)\geq g(x)\,\,\,\,\,\text{on}\,\,\,\,\ (t,x)\in(0,T)\times \partial B_2=\Gamma,\end{equation}
where $ \partial B_2=\{x\in\mathbb{R}^N: |x|=2\},$ $\nu$ is the outward unit normal vector on $ \partial B_2$, relative to $A$, and $f, g\in L^1(\partial B_2)$ are nontrivial functions.

Namely, they have proved the following results:
\begin{itemize}
\item[(i)] Let $N\geq2,$ $p,q>1$ and let $f,g\in L^1(\partial B_2).$ If
$$\min\left\{\int_{\partial B_2}f(x)d\sigma, \int_{\partial B_2}g(x)d\sigma\right\}>0$$
and
$$\rho> 1+\min\left\{\frac{p(q+1)}{p+1},\frac{q(p+1)}{q+1}\right\},$$
then the system \eqref{SP1} with either Neumann \eqref{SP3} or Dirichlet \eqref{SP4} boundary conditions, does not admit global in time weak solutions.
\item[(ii)] If
$$0<\rho<1+\min\left\{\frac{p(q+1)}{p+1},\frac{q(p+1)}{q+1}\right\},$$
then the system \eqref{SP1} with either Neumann \eqref{SP3} or Dirichlet \eqref{SP4} boundary conditions, admits global in time weak solutions for some $f$ and $g$.\end{itemize}
Also, the similar results yields (if $f=g$ and $p=q$) for single inequality \eqref{P1}.
Hence, for the inequality \eqref{P1}, the critical exponent was defined as $p_{c}=\rho-1.$

It should be noted that in the above results the critical cases $p=p_c=\rho-1$ were not investigated and the large-time behavior of the solution in the critical cases was left open (see \cite{Jleli}, Remark 1.5).

Later on,  Jleli and Samet in \cite{Jleli1} concerned with nonexistence results for a class of nonlinear hyperbolic inequalities
\begin{equation}\label{H1}
u_{tt}-\Delta u\geq(|x|-1)^{-\rho}|u|^{p},\,\ (t,x)\in(0,T)\times A={Q_T},\end{equation} with two boundary conditions:
\\$\bullet$ the Neumann type
\begin{equation}\label{P2}
\frac{\partial u}{\partial \nu}(t,x)\geq g(x)\,\,\,\,\text{on}\,\,\,\,\ (t,x)\in(0,T)\times \partial B_2=\Gamma,\end{equation}
\\$\bullet$ the Dirichlet type
\begin{equation}\label{P3}
u(t,x)\geq
 g(x)\,\,\,\,\text{on}\,\,\,\,\ (t,x)\in(0,T)\times \partial B_2=\Gamma,\end{equation}
where $g\in L^1(\partial B_2)$.\\
They showed that:
\begin{itemize}
\item[(i)] Let $N\geq2,$ $p1$ and let $g\in L^1(\partial B_2).$ If
$$\int_{\partial B_2}g(x)d\sigma>0\,\,\,\,\,\text{and}\,\,\,\,\,p<\rho-1,$$
then the equation \eqref{H1} with either Neumann \eqref{P2} or Dirichlet \eqref{P3} boundary conditions, does not admit global in time weak solutions.
\item[(ii)] If
 $p>\rho-1$, then the equation \eqref{H1} with either Neumann \eqref{P2} or Dirichlet \eqref{P3} boundary conditions, admits global in time weak solutions for some $g>0$.\end{itemize}

However, the critical case $p=\rho-1$ was also not studied and remained open.

The  paper is devoted to studying the behavior of weak solutions in the critical case $\rho=p+1$ for the parabolic \eqref{P1} and hyperbolic \eqref{H1} inequalities with inhomogeneous Neumann \eqref{P2}
either Dirichlet \eqref{P3} boundary conditions.

Moreover, we consider weak solutions in the critical case $\rho=p+1$ to systems of semilinear parabolic inequality \eqref{SP1} and semilinear hyperbolic inequality
\begin{equation}\label{SP2}\left\{\begin{array}{l}
u_{tt}-\Delta u\geq(|x|-1)^{-\rho}|v|^{q},\,\ (t,x)\in(0,T)\times A={Q_T},\\{}\\
v_{tt}-\Delta v\geq(|x|-1)^{-\rho}|u|^{p},\,\ (t,x)\in(0,T)\times A={Q_T} \end{array}\right.\end{equation}
where $A=\{x\in \mathbb{R}^N: 1<|x|\leq2\}$, $N\geq2$, $\rho>0$  and $p,q>1.$

The system \eqref{SP1} and \eqref{SP2} will be studied under inhomogeneous Neumann \eqref{SP3} either inhomogeneous Dirichlet \eqref{SP4} boundary conditions.

In the paper \cite{Fujita1}, Fujita studied
the semilinear problem
\begin{equation}\label{0.1}\left\{\begin{array}{l}
{{u}_{t}}-\Delta u={{u}^{p}},\,\,\,(t,x)\in  (0,T)\times\mathbb{R}^N, \\{}\\
u\left( 0,x \right)=u_0\left( x \right)\ge 0,\,\,\,x\in \mathbb{R}^N, \end{array}\right.\end{equation}and proved the following results:
\begin{itemize}
\item[(a)]  If $1<p<{p}_{c}=1+\frac{2}{N}$, then the problem \eqref{0.1} admits no global positive solutions;
\item[(b)] If $p>{p}_{c},$ then for sufficiently small
initial data, the problem \eqref{0.1} admits positive global solutions.
\end{itemize}
The number $$\displaystyle{{p}_{c}=1+\frac{2}{N}}$$ is called the Fujita critical exponent,  which distinguishes between the existence and nonexistence of global in time solutions of \eqref{0.1}.

We have to mention that, when $p=p_c$, this problem was considered by Hayakawa in \cite{Hayakawa} for $N=1,2$ and Kobayashi et. al. in \cite{Kobayashi} for arbitrary $N$. For any nontrivial nonnegative initial data, it was proven that there is no nonnegative global solution.

In \cite{EsH}, Escobedo and Herrero considered systems of parabolic equations
\begin{equation}\label{SPE}\left\{\begin{array}{l}
u_t-\Delta u=|v|^{q},\,\ (t,x)\in(0,T)\times \mathbb{R}^N,\\{}\\
v_t-\Delta v=|u|^{p},\,\ (t,x)\in(0,T)\times \mathbb{R}^N, \end{array}\right.\end{equation} they have proved the following results:
\begin{itemize}
\item[(a)]  There is no global in time positive solutions if $$\frac{N}{2}\leq \max\left\{\frac{p+1}{pq-1},\frac{q+1}{pq-1}\right\};$$
\item[(b)] There exist global in time positive solutions if $$\frac{N}{2}> \max\left\{\frac{p+1}{pq-1},\frac{q+1}{pq-1}\right\}.$$
\end{itemize}

In \cite{Bandle}, Bandle and Levine considered the problem \eqref{0.1} on the exterior domain of $\mathbb{R}^N$ with homogeneous Dirichlet boundary conditions, and they proved that the critical exponent is still $\displaystyle{{p}_{c}=1+\frac{2}{N}}.$

Moreover, the first contribution on the subject of the nonexistence of wave inequality in the
whole space is \cite{Kato} and more generally \cite{Mitidieri1}, where it was shown that the hyperbolic inequality
\begin{equation}\label{KAT}
u_{tt}-\Delta u \geq |u|^p\,\,\,\, \text{in}\,\,\, (0,\infty) \times \mathbb{R}^N
\end{equation}admits another critical exponent (known as Kato exponent) $\displaystyle\large p_K=\frac{N+1}{N-1}$. In \cite{Pohozaev},
Pohozaev and Veron generalized the work in \cite{Mitidieri1} and pointed out the sharpness of
$p_K$ for the problem \eqref{KAT}. Namely, they proved that,
\begin{itemize}
\item[(a)]  if $N\geq 2,\,\int_{\mathbb{R}^N}\partial_tu(0,x)>0$ and $1<p\leq p_K$, then problem \eqref{KAT} admits
no global weak solution;
\item[(b)] if $p>p_K$, then problem \eqref{KAT} admits global positive solutions that satisfy
property $\int_{\mathbb{R}^N}\partial_tu(0,x)>0$.
\end{itemize}

The critical exponents have been studied by a large number of authors in the whole space (for example(\cite{Borikhanov0, Cazenave, Kirane0, DelSantiago, Deng, Levine, Mitidieri, Kirane1, Weissler}), cone-like domains (for example \cite{Cast, Igar, Laptev, Levine1}) and exterior domain (for example \cite{Alqahtani,  Borikhanov, Dabb, Ikeda, Ikehata, Jleli3, Jleli0, Levine2, Zhang}),  for more details about critical exponents and blowup theorems one can see in \cite{Levine, Mitidieri, Soup, Samarskii} and the references therein.

\section{Main results}
In this section, we derive the main results of this work.
\begin{definition}[Weak solution]\label{WS1} We say that $u\in L^p_\text{loc}({Q_T})$ is a local weak solution to \eqref{P1}-\eqref{P2}, if the following inequality
\begin{equation}\label{W1}\begin{split}
&\int_{Q_T}(|x|-1)^{-\rho}|u|^{p}\varphi  dx dt +\int_\Gamma f\varphi d\sigma dt\leq -\int_{Q_T} u\varphi_tdx dt-\int_{Q_T} u\Delta\varphi dx dt,
\end{split}\end{equation}
holds for all $\varphi\in C^{2}_c({Q_T}), \varphi\geq0, \varphi(T,\cdot)=0$ with $\displaystyle\frac{\partial \varphi}{\partial \nu}|_\Gamma=0$ and the notation $d\sigma$ is the surface measure on $\partial{\Omega}$.
\end{definition}

\begin{definition}[Weak solution]\label{WS2}A locally integrable function $u\in L^p_\text{loc}({Q_T})$ is called a local  weak solution of \eqref{P1}-\eqref{P3}, if
\begin{equation}\label{W2}\begin{split}
&\int_{Q_T}(|x|-1)^{-\rho}|u|^{p}\psi  dx dt -\int_\Gamma f\frac{\partial \psi}{\partial \nu}d\sigma dt\leq -\int_{Q_T} u\psi_tdx dt-\int_{Q_T} u\Delta\psi dx dt,
\end{split}\end{equation}
holds true for any function $\psi\in C^{2}_c({Q_T}), \psi\geq0, \psi(T,\cdot)=0$ and $\psi|_\Gamma=0,\,\displaystyle\frac{\partial \psi}{\partial \nu}|_\Gamma\leq0$.
\end{definition}

\begin{definition}[Weak solution]\label{WS3} We say that $u\in L^p_\text{loc}({Q_T})$ is a weak solution to \eqref{H1}-\eqref{P2}, if the inequality
\begin{equation}\label{W3}\begin{split}
&\int_{Q_T}(|x|-1)^{-\rho}|u|^{p}\varphi  dx dt +\int_\Gamma f\varphi d\sigma dt\leq \int_{Q_T} u\varphi_{tt}dx dt-\int_{Q_T} u\Delta\varphi dx dt,
\end{split}\end{equation}
holds for all $\varphi\in C^{2}_c({Q_T}), \varphi\geq0, \varphi(T,\cdot)=0$ and $\displaystyle\frac{\partial \varphi}{\partial \nu}|_\Gamma=0$.
\end{definition}

\begin{definition}[Weak solution]\label{WS4} A local weak solution of the problem  \eqref{H1}-\eqref{P3} is a function  $u\in L^p_\text{loc}({Q_T})$ such that
\begin{equation}\label{W4}\begin{split}
&\int_{Q_T}(|x|-1)^{-\rho}|u|^{p}\psi  dx dt -\int_\Gamma f\frac{\partial \psi}{\partial \nu}d\sigma dt\leq \int_{Q_T} u\psi_{tt}dxdt-\int_{Q_T} u\Delta\psi dx dt,
\end{split}\end{equation}
for any $\psi\in C^{2}_c({Q_T}), \psi\geq0, \psi(T,\cdot)=0$ and $\psi|_\Gamma=0,\,\displaystyle\frac{\partial \psi}{\partial \nu}|_\Gamma\leq0$.
\end{definition}

\begin{definition}[Weak solution]\label{WS5} We say that a pair of functions $(u,v)\in L^p_\text{loc}({Q_T})\times L^q_\text{loc}({Q_T})$ is a local weak solution of the problem  \eqref{SP1}-\eqref{SP3}  if
\begin{equation}\label{W5}\begin{split}
&\int_{Q_T}(|x|-1)^{-\rho}|u|^{p}\varphi  dx dt+\int_\Gamma g\varphi d\sigma dt\leq -\int_{Q_T} v\varphi_{t}dxdt-\int_{Q_T} v\Delta\varphi dx dt,
\end{split}\end{equation}
\begin{equation}\label{W6}\begin{split}
&\int_{Q_T}(|x|-1)^{-\rho}|v|^{p}\varphi dx dt+\int_\Gamma f\varphi d\sigma dt\leq -\int_{Q_T} u\varphi_{t}dxdt-\int_{Q_T} u\Delta\varphi dx dt,
\end{split}\end{equation}
for any $\varphi\in C^{2}_c({Q_T}), \varphi\geq0, \varphi(T,\cdot)=0$ and $\displaystyle\frac{\partial \varphi}{\partial \nu}|_\Gamma=0$.
\end{definition}

\begin{definition}[Weak solution]\label{WS6} We say that a pair of functions $(u,v)\in L^p_\text{loc}({Q_T})\times L^q_\text{loc}({Q_T})$ is a local weak solution of the problem  \eqref{SP1}-\eqref{SP4}  if
\begin{equation}\label{W7}\begin{split}
&\int_{Q_T}(|x|-1)^{-\rho}|u|^{p}\psi  dx dt-\int_\Gamma g\frac{\partial \psi}{\partial \nu} d\sigma dt\leq -\int_{Q_T} v\psi_{t}dxdt-\int_{Q_T} v\Delta\psi dx dt,
\end{split}\end{equation}
\begin{equation}\label{W8}\begin{split}
&\int_{Q_T}(|x|-1)^{-\rho}|v|^{p}\psi dx dt-\int_\Gamma f\frac{\partial \psi}{\partial \nu} d\sigma dt\leq -\int_{Q_T} u\psi_{t}dxdt-\int_{Q_T} u\Delta\psi dx dt,
\end{split}\end{equation}
for any $\psi\in C^{2}_c({Q_T}), \psi\geq0, \psi(T,\cdot)=0$ and $\psi|_\Gamma=0,\,\displaystyle\frac{\partial \psi}{\partial \nu}|_\Gamma\leq0$.
\end{definition}

\begin{definition}[Weak solution]\label{WS7} We say that a pair of functions $(u,v)\in L^p_\text{loc}({Q_T})\times L^q_\text{loc}({Q_T})$ is a local weak solution of the problem  \eqref{SP2}-\eqref{SP3}  if
\begin{equation}\label{W9}\begin{split}
&\int_{Q_T}(|x|-1)^{-\rho}|u|^{p}\varphi  dx dt+\int_\Gamma g\varphi d\sigma dt\leq \int_{Q_T} v\varphi_{tt}dxdt-\int_{Q_T} v\Delta\varphi dx dt,
\end{split}\end{equation}
\begin{equation}\label{W10}\begin{split}
&\int_{Q_T}(|x|-1)^{-\rho}|v|^{p}\varphi dx dt+\int_\Gamma f\varphi d\sigma dt\leq \int_{Q_T} u\varphi_{tt}dxdt-\int_{Q_T} u\Delta\varphi dx dt,
\end{split}\end{equation}
for any $\varphi\in C^{2}_c({Q_T}), \varphi\geq0, \varphi(T,\cdot)=0$ and $\displaystyle\frac{\partial \varphi}{\partial \nu}|_\Gamma=0$.
\end{definition}

\begin{definition}[Weak solution]\label{WS8} We say that a pair of functions $(u,v)\in L^p_\text{loc}({Q_T})\times L^q_\text{loc}({Q_T})$ is a local weak solution of the problem  \eqref{SP2}-\eqref{SP4}  if
\begin{equation}\label{W11}\begin{split}
&\int_{Q_T}(|x|-1)^{-\rho}|u|^{p}\psi  dx dt-\int_\Gamma g\frac{\partial\psi}{\partial\nu} d\sigma dt\leq \int_{Q_T} v\psi_{tt}dxdt-\int_{Q_T} v\Delta\psi dx dt,
\end{split}\end{equation}
\begin{equation}\label{W12}\begin{split}
&\int_{Q_T}(|x|-1)^{-\rho}|v|^{p}\psi  dx dt-\int_\Gamma f\frac{\partial\psi}{\partial\nu}d\sigma dt\leq \int_{Q_T} u\psi_{tt}dxdt-\int_{Q_T} u\Delta\psi dx dt,
\end{split}\end{equation}
for any $\psi\in C^{2}_c({Q_T}), \psi\geq0, \psi(T,\cdot)=0$ and $\psi|_\Gamma=0,\,\displaystyle\frac{\partial \psi}{\partial \nu}|_\Gamma\leq0$.
\\If $T=+\infty,$ then $u$ is called a global in time weak solution.
\end{definition}

\begin{theorem}\label{TT1} Let $N\geq2, p>1$ and $f\in L^1(\partial B_2).$ If
$$\int_{\partial B_2}f(x)d\sigma>0\,\,\,\,\,\text{and}\,\,\,\,\,p=p_c=\rho-1,$$
then
\begin{itemize}
\item[\textbf{(i).}]
the problem \eqref{P1} with either Neumann \eqref{P2} or Dirichlet \eqref{P3} boundary conditions, does not admit global weak solutions.
\item[\textbf{(ii).}] the problem \eqref{H1} with either Neumann \eqref{P2} or Dirichlet \eqref{P3} boundary conditions, admits no global in time weak solutions.
\end{itemize}
\end{theorem}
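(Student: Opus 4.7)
My plan is to argue by contradiction using the nonlinear capacity method of Mitidieri--Pohozaev, but with test functions carrying a logarithmic profile that supplies the extra free parameter needed at the critical exponent. At $\rho=p+1$ the usual polynomial test function yields an inequality of the form $\int_{\partial B_2}f\,d\sigma\lesssim\mathrm{const}$ in which the constant is independent of $T$ and of any scaling parameter, so no contradiction can be reached; by contrast, a logarithmic profile introduces a scale $C$ through which the right-hand side vanishes as $C\to\infty$.

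Assuming a global weak solution exists, I would insert into the weak formulations \eqref{W1}--\eqref{W2} (respectively \eqref{W3}--\eqref{W4} in the hyperbolic case) a product test function
\[
\varphi_{C,T}(t,x)=\eta_0(t/T)^{\lambda}\,\xi_C(|x|),\qquad \xi_C(r)=\chi(r)\Bigl[\log\tfrac{C}{r-1}\Bigr]^{-\ell},
\]
with $\eta_0\in C^{2}$ a standard time cut-off, $\lambda$ sufficiently large for the time Young estimate to be finite, $\ell>0$, and $\chi$ a smooth modification enforcing the correct condition at $r=2$: in the Neumann case $\xi_C'(2)=0$ and $\xi_C(2)\asymp(\log C)^{-\ell}$; in the Dirichlet case an extra $(2-r)$-type factor paired with a polynomial piece in $(r-1)$ yields $\xi_C(2)=0$, $\partial_\nu\xi_C(2)<0$ with $|\partial_\nu\xi_C(2)|\asymp(\log C)^{-\ell}$. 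Applying Young's inequality to $\int u\,\varphi_t$ (respectively $\int u\,\varphi_{tt}$) and to $\int u\,\Delta\varphi$ with the singular weight $(|x|-1)^{-\rho}$ absorbed into the good factor reduces the problem to bounding $S(C)=\int_A (|x|-1)^{\rho/(p-1)}|\Delta\xi_C|^{p'}\xi_C^{-1/(p-1)}\,dx$. The core calculation, specific to $\rho=p+1$, is that the $(r-1)$-powers combine exactly to $(r-1)^{-1}$ near $r=1$ and the powers of $L(r)=\log(C/(r-1))$ combine to $L^{-\ell-p'}$, so that the substitution $u=L(r)$ transforms the radial integral into $\int_{\log C}^{\infty}u^{-\ell-p'}\,du\asymp(\log C)^{1-\ell-p'}$, which vanishes as $C\to\infty$ since $\ell+p'>1$ automatically.

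Since the boundary term on $\Gamma$ is bounded below by $c(\log C)^{-\ell}T\int_{\partial B_2}f\,d\sigma$, while the temporal error produced by Young's inequality is $O(T^{1-p'})$ in the parabolic cases and $O(T^{1-2p'})$ in the hyperbolic cases -- in both cases $o(T)$ as $T\to\infty$ because $p'>1$ -- passing $T\to\infty$ first and then dividing through by $(\log C)^{-\ell}$ gives $\int_{\partial B_2}f\,d\sigma\lesssim(\log C)^{1-p'}$, and sending $C\to\infty$ produces the desired contradiction. The hard part will be the construction of $\xi_C$ as a globally $C^{2}$ object that respects the outer boundary condition without spoiling the critical cancellation driving the bound on $S(C)$; in particular, the polynomial prefactor in the Dirichlet case must be tuned carefully so that the spatial integral at $r=2$ converges for every $p>1$ while $\partial_\nu\psi_C(2)$ remains strictly negative, and this is the precise balance that distinguishes the critical case from the subcritical one treated in \cite{Jleli,Jleli1}.
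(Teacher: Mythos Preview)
Your overall strategy---contradiction via the Mitidieri--Pohozaev capacity method with a logarithmic free parameter---is the same as the paper's, and your computation showing that at $\rho=p+1$ the spatial integral collapses to $\int (r-1)^{-1}L^{-\ell-p'}dr\asymp(\log C)^{1-\ell-p'}$ captures exactly the mechanism that makes the critical case work.

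The implementation, however, is genuinely different. You propose a \emph{logarithmic weight} $\xi_C(r)=[\log(C/(r-1))]^{-\ell}$, positive throughout $A$ and of size $(\log C)^{-\ell}$ on $\partial B_2$; the paper instead uses a \emph{logarithmically rescaled cutoff}
\[
\xi(x)=\Phi^k\!\left(\frac{\ln(R(|x|-1))}{\ln R}\right),
\]
which is identically $0$ on $|x|<1+1/R$, identically $1$ on $1+1/\sqrt{R}<|x|\le 2$, and whose logarithmic gain enters through the chain-rule factor $1/\ln R$ in $\nabla\xi$. This choice buys the paper three things you must supply by hand: (a) $\xi\in C^2_c(A)$ automatically, whereas your $\xi_C$ is not compactly supported near $|x|=1$ and $\xi_C'$ blows up there, so admissibility in \eqref{W1}--\eqref{W4} requires an extra cutoff and a limiting argument; (b) $\xi\equiv 1$ near $\partial B_2$, so $\partial_\nu\xi=0$ and the boundary term is simply $CT\int_{\partial B_2}f\,d\sigma$, with no $\chi$-modification needed; (c) for the Dirichlet problem the paper multiplies by the \emph{harmonic} function $H(x)=\ln 2-\ln|x|$ (resp.\ $2^{N-2}|x|^{2-N}-1$ for $N\ge 3$), so $\Delta(H\xi)=H\Delta\xi+2\nabla H\cdot\nabla\xi$ has no zeroth-order term---your $(2-r)$-type prefactor is not harmonic in $\mathbb{R}^N$ for $N\ge 2$ and would generate an additional contribution you must estimate. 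The paper also couples the scales via $R=T\to\infty$ in a single limit, while you decouple ($T\to\infty$ first, then $C\to\infty$); both orderings work here. In short, your route is viable and the core cancellation is right, but the paper's cutoff is cleaner precisely on the technical points you flag as ``the hard part.''
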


\begin{theorem}\label{TT2} Let $N\geq2$ and $f,g\in L^1(\partial B_2).$ Suppose that $(u,v)\in L^p_{\text{loc}}({Q_T})\times L^q_{\text{loc}}({Q_T})$ is a global weak solution to problem \eqref{SP1}-\eqref{SP3}. If
\begin{equation}\label{ss}
\int_{\partial B_2}f(x)d\sigma>0,\,\, \int_{\partial B_2}g(x)d\sigma>0
\end{equation}and
\begin{equation}\label{ssT}
\rho=\left\{\begin{array}{l}
1+\frac{p(q+1)}{p+1}\,\,\,\,\,\,\,\,\,\text{if}\,\,\,\,p\geq q,\\
1+\frac{q(p+1)}{q+1}\,\,\,\,\,\,\,\,\,\text{if}\,\,\,\,p<q,\end{array}\right.
\end{equation}then
\begin{itemize}
\item[\textbf{(i).}]
the problem \eqref{SP1} with Neumann \eqref{SP3} or Dirichlet \eqref{SP4} boundary conditions admits no global in time weak solutions.
\item[\textbf{(ii).}] the problem \eqref{SP2} with either Neumann \eqref{SP3} or Dirichlet \eqref{SP4} boundary conditions, does not admit global weak solutions.

\end{itemize}
\end{theorem}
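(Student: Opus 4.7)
My plan is to extend the test function argument used for Theorem~\ref{TT1} to the coupled setting, combining the two weak inequalities in a way that brings in the critical identity~\eqref{ssT}. By interchanging $(u,v,p,q,f,g)\leftrightarrow(v,u,q,p,g,f)$ if needed, I may assume throughout that $p\geq q$, so $\rho=1+\tfrac{p(q+1)}{p+1}$. Suppose, toward a contradiction, that a global weak solution $(u,v)$ exists, and plug into the weak formulations~\eqref{W5}--\eqref{W6} (Neumann case~\eqref{SP3}) or~\eqref{W7}--\eqref{W8} (Dirichlet case~\eqref{SP4}) a product test function
$$
 \varphi_T(t,x)=\eta(t/T)^{\ell}\,\Psi_T(|x|-1),\qquad T\gg 1,
$$
with $\eta\in C_c^\infty([0,1))$ equal to $1$ on $[0,\tfrac12]$, $\ell$ large, and $\Psi_T$ a smooth spatial profile on $[0,1]$ matching the prescribed boundary requirement at $|x|=2$ ($\Psi_T'(1)=0$ in the Neumann case, $\Psi_T(1)=0$, $\Psi_T'(1)\leq 0$ in the Dirichlet case). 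The novelty, in line with the abstract, is to take $\Psi_T$ with a logarithmic correction near the singular boundary, of the form $\Psi_T(r)\sim r^{\beta}(\log(A_T/r))^{-\gamma}$ as $r\to 0^+$, with $A_T\to\infty$ and the exponents $\beta\geq 0$, $\gamma>0$ tuned to the critical weight and to~\eqref{ssT}.

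Writing $U:=\int_{Q_T}(|x|-1)^{-\rho}|u|^p\varphi_T\,dx\,dt$ and $V:=\int_{Q_T}(|x|-1)^{-\rho}|v|^q\varphi_T\,dx\,dt$, I would estimate the right-hand sides of the weak inequalities by weighted H\"older inequalities with dual exponents $(q,q')$ for the first and $(p,p')$ for the second, e.g.
$$
 \Big|\int v\,\partial_t\varphi_T\,dx\,dt\Big|\leq V^{1/q}\Big(\int(|x|-1)^{\rho(q'-1)}\varphi_T^{1-q'}|\partial_t\varphi_T|^{q'}\,dx\,dt\Big)^{1/q'},
$$
and analogously for $v\,\Delta\varphi_T$, $u\,\partial_t\varphi_T$ and $u\,\Delta\varphi_T$, to obtain
$$
 U+\int_{\Gamma}g\,\varphi_T\,d\sigma\,dt\lesssim V^{1/q}\,J_q(T)^{1/q'},\qquad V+\int_{\Gamma}f\,\varphi_T\,d\sigma\,dt\lesssim U^{1/p}\,J_p(T)^{1/p'},
$$
where $J_p(T),J_q(T)$ are explicit test function integrals. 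Substituting the second bound into the first and iterating produces the closed estimate
$$
 \int_{\Gamma}(f+g)\,\varphi_T\,d\sigma\,dt\lesssim J_p(T)^{(p-1)/(pq-1)}\,J_q(T)^{p(q-1)/(pq-1)}.
$$

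The critical identity~\eqref{ssT} is exactly the algebraic relation that makes the powers of $T$ balance on both sides of this inequality, so the contradiction must come from the logarithmic factors in $A_T$. By~\eqref{ss}, the left-hand side is bounded below by $c_0 T(\log A_T)^{-\gamma}$, while at the critical $\rho$ the right-hand side reduces, after a direct computation with the log-modified profile, to a bound of the form $CT(\log A_T)^{-\gamma-\delta}$ with $\delta>0$ depending on $p,q,p',q'>1$. Letting $T,A_T\to\infty$ then violates the positivity of the boundary averages in~\eqref{ss}. Part~(ii) for the hyperbolic system~\eqref{SP2} is handled by the same template, with $\partial_t\varphi_T$ replaced by $\partial_{tt}\varphi_T$ and $\ell$ enlarged so that $|\partial_{tt}\varphi_T|^{q'}\varphi_T^{1-q'}$ remains integrable in $t$; the spatial integrals, which determine the critical $\rho$, are unaffected.

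The main technical obstacle, and the place where the new logarithmic-test-function method plays its essential role, is the simultaneous calibration of $\beta$, $\gamma$ and $A_T$ such that (a) $\Psi_T$ meets the prescribed Neumann or Dirichlet trace at $|x|=2$ (after multiplication by a suitable cutoff near $|x|=2$ and by a spatial truncation near $|x|=1$), (b) both test function integrals $J_p(T),J_q(T)$ are finite at the critical value of $\rho$ given by~\eqref{ssT}, and (c) the combined logarithmic exponent on the right-hand side strictly exceeds $\gamma$. Because for $p>q$ the critical $\rho$ lies strictly between $q+1$ and $p+1$, only one of the two spatial integrals can be made log-critical with a single choice of $\beta$, and it is precisely the coupling of the two weak inequalities through the H\"older estimate that produces the sharp combined threshold~\eqref{ssT}.
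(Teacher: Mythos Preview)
Your algebraic skeleton---two H\"older estimates coupling $U$ and $V$, substitution, then Young's inequality to get a closed bound of the form $\int_\Gamma g\,\varphi_T\lesssim J_p^{(p-1)/(pq-1)}J_q^{p(q-1)/(pq-1)}$---is exactly what the paper does (equations \eqref{ss3}--\eqref{E6}). The difference lies entirely in the spatial test function, and here your proposal has a genuine gap that the paper's construction avoids.

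The paper does \emph{not} use a profile of the form $r^{\beta}(\log(A_T/r))^{-\gamma}$. Instead it takes $\xi(x)=\Phi^k\bigl(\tfrac{\ln(R(|x|-1))}{\ln R}\bigr)$, a cutoff in the logarithmic variable supported on $\{1+\tfrac1R<|x|\le 2\}$ and identically $1$ on $\{1+\tfrac1{\sqrt R}<|x|\le 2\}$. Two features of this choice are essential. First, $\xi\equiv 1$ at and near $|x|=2$, so the boundary term is exactly $cT\int_{\partial B_2}f\,d\sigma$ with no logarithmic loss on the left-hand side. Second, $\Delta\xi$ is supported in the thin shell $1+\tfrac1R<|x|<1+\tfrac1{\sqrt R}$ and satisfies $|\Delta\xi|\le C(|x|-1)^{-2}\bigl[(\ln R)^{-2}\xi^{(k-2)/k}+(\ln R)^{-1}\xi^{(k-1)/k}\bigr]$; the extra $(\ln R)^{-1}$ factors are what produce the logarithmic gain in $\mathcal I_3(\varphi,p)$ and $\mathcal I_3(\varphi,q)$. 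The parameters $T$ and $R$ are kept independent until the end, when one sets $R=T^{\lambda}$; at the critical value~\eqref{ssT} the pure $R$-exponent in the dominant term vanishes and only the $(\ln R)^{-(q+1)}$ factor drives the right-hand side to zero.

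Your profile $r^{\beta}(\log(A_T/r))^{-\gamma}$ does not achieve the same thing. For $\beta>0$ the leading term of $\Psi_T''$ carries the \emph{same} logarithmic power as $\Psi_T$ itself, so after forming $\Psi_T^{-1/(p-1)}|\Delta\Psi_T|^{p/(p-1)}$ the log exponent is again $-\gamma$; plugging into your closed estimate gives right-hand side $\sim T(\log A_T)^{-\gamma}$, matching the left-hand side with no $\delta>0$ gain. For $\beta=0$ you do pick up an extra $(\log)^{-1}$ in $\Psi_T''$, but then the $r$-integral in $J_p$ has exponent $(\rho-2p)/(p-1)$, which is $<-1$ whenever $\rho<p+1$, i.e.\ whenever $p>q$; the integral diverges and you are forced back to a truncation near $r=0$, whose contribution then dominates. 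This is precisely the obstruction you flag in your last paragraph, and it is not a matter of tuning $\beta,\gamma$: the paper's cutoff-in-log-variable construction is what resolves it, by making the entire spatial contribution live in a shell where the $(\ln R)^{-1}$ factors are built in uniformly, independent of whether one is computing $J_p$ or $J_q$. Your claim that ``the critical identity is exactly the algebraic relation that makes the powers of $T$ balance'' is also not the mechanism: the $T$- and $R$-dependences decouple, and at criticality it is the log factor alone (not a $T$-balance) that yields the contradiction.
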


\subsection{Test functions}This section will cover some test functions and their properties. Additionally, we will establish some useful estimates related to the test functions.

Now, we consider the test function for the coefficients $T,R$ is sufficiently large in the form
\begin{equation}\label{T0}
\varphi(t,x)=\mu(t)\xi(x),\,\,\,t\in(0,T),\,x\in A,\end{equation}where
\begin{equation}\label{T1}
 \mu(t)=\left(1-\frac{t}{T}\right)^l,\,\,\,t>0,\,\, l>\frac{p+1}{p-1}
\end{equation}
and let $\xi(x)$ be a family of smooth functions in $A$ satisfying
\begin{equation}\label{T2}
 0\leq\xi(x)\leq1,\,\,\,\text{supp}(\xi)\subset\subset 1+\frac{1}{R}<|x|\leq2,\,\xi=1\,\,\,\text{in}\,\,\,1+\frac{1}{\sqrt{R}}<|x|\leq2.
\end{equation}

\begin{lemma}\label{critcal case}Let the test function $\xi(x)$ be introduced in the form
\begin{equation}\label{CRC1} \xi(x)=\Phi^k\left(\frac{\ln(R(|x|-1))}{\ln R}\right),\,\, k>2,\end{equation}where $\Phi:\mathbb{R}\to[0,1]$ be a smooth function function such that
\begin{equation*}
\Phi(z)=
 \begin{cases}
   0 &\text{if\,\,\,\, $-\infty<z\leq0 $},\\
   \nearrow &\text{if\,\,\,\, $0<z<\frac{1}{2}$},\\
   1 &\text{if\,\,\,\, $\frac{1}{2}\leq z\leq 1$}.
 \end{cases}\end{equation*}
 Consequently,
 \begin{equation*}
\Phi\left(\frac{\ln(R(|x|-1))}{\ln R}\right)=
 \begin{cases}
   0 &\text{if\,\,\,\, $1<|x|<1+\frac{1}{R}$},\\
   \nearrow &\text{if\,\,\,\, $1+\frac{1}{R}<|x|<1+\frac{1}{\sqrt{R}}$},\\
   1 &\text{if\,\,\,\, $1+\frac{1}{\sqrt{R}}<|x|\leq2$}.
 \end{cases}\end{equation*}
Suppose that
 \begin{equation}\label{TPQ}
  |\Phi'(z)|\leq C,\,\,\,|\Phi''(z)|\leq C,
 \end{equation}
 then, the next estimate holds true
 \begin{equation}\label{TPQ1}\begin{split}
 |\Delta\xi(x)|&\leq  \frac{C}{(|x|-1)^2\ln^2 R }\Phi^{k-2}\left(\frac{\ln(R(|x|-1))}{\ln R}\right)
\\&+\frac{C}{(|x|-1)^2\ln R }\Phi^{k-1}\left(\frac{\ln(R(|x|-1))}{\ln R}\right). \end{split}\end{equation} \end{lemma}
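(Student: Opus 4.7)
Since $\xi$ is radial, I would first reduce the computation to a one-variable one by writing
\[
\Delta \xi(x) = \xi_{rr}(r) + \frac{N-1}{r}\,\xi_r(r), \qquad r=|x|\in (1,2],
\]
and introduce the auxiliary function $z(r)=\ln(R(r-1))/\ln R$, so that $\xi = \Phi^{k}\circ z$. The plan is to differentiate via the chain and product rules, substitute the elementary identities
\[
z'(r)=\frac{1}{(r-1)\ln R},\qquad z''(r)=-\frac{1}{(r-1)^{2}\ln R},
\]
and then invoke the bounds \eqref{TPQ} on $\Phi'$ and $\Phi''$ to control every factor in absolute value.

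Carrying out the differentiation I expect to obtain three types of terms: one of the form $k(k-1)\,\Phi^{k-2}(z)\,(\Phi'(z))^{2}(z'(r))^{2}$, which produces the $\Phi^{k-2}/((r-1)^{2}\ln^{2}R)$ contribution; one of the form $k\,\Phi^{k-1}(z)\,\Phi''(z)(z'(r))^{2}$, which is also of order $\Phi^{k-1}/((r-1)^{2}\ln^{2}R)$; and one of the form $k\,\Phi^{k-1}(z)\,\Phi'(z)\,z''(r)$, of order $\Phi^{k-1}/((r-1)^{2}\ln R)$. The first-order piece $(N-1)r^{-1}\xi_r = (N-1)r^{-1}k\Phi^{k-1}(z)\Phi'(z)z'(r)$ is of order $\Phi^{k-1}/(r(r-1)\ln R)$, and since $1<r\le 2$ gives $r(r-1)\ge (r-1)^{2}$, it can be absorbed into the third type above.

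To finish, I would use that for $R$ sufficiently large one has $\ln^{2}R\ge \ln R$, so the $\Phi^{k-1}/((r-1)^{2}\ln^{2}R)$ term is dominated by the $\Phi^{k-1}/((r-1)^{2}\ln R)$ term; collecting constants yields exactly \eqref{TPQ1}. I do not foresee any serious obstacle: the argument is essentially a careful bookkeeping of chain-rule terms together with the elementary observation that the only way an extra power of $1/\ln R$ can be gained is via a factor $(z')^{2}$, which is what accounts for the two different logarithmic weights in the stated bound. The only subtlety is tracking which summands carry $\Phi^{k-2}$ and which carry $\Phi^{k-1}$, so that the bound is sharp in both exponents (which matters later when one uses it inside Hölder-type estimates on the nonlinearity).
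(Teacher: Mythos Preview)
Your proposal is correct and follows essentially the same approach as the paper: write the radial Laplacian, apply the chain rule with $z(r)=\ln(R(r-1))/\ln R$, and bound the resulting three contributions using $|\Phi'|,|\Phi''|\le C$. The only cosmetic difference is that the paper merges the $z''$ term and the $(N-1)r^{-1}\xi_r$ term into a single expression before estimating, whereas you treat them separately and then absorb via $r(r-1)\ge (r-1)^2$; both routes yield the same bound \eqref{TPQ1}.
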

\begin{proof}[Proof of Lemma \ref{critcal case}.] Indeed, the function $\Phi(x)$ is a radial, then we arrive at

\begin{equation*}\label{A0008}\begin{split}
\Delta\xi(r)&=\left(\frac{d^2}{dr^2}+\frac{N-1}{r}\frac{d}{dr}\right)\Phi^k\left(\frac{\ln(R(r-1))}{\ln R}\right)
\\&=k(k-1)\Phi^{k-2}\left(\frac{\ln(R(r-1))}{\ln R}\right)\left[\Phi'\left(\frac{\ln(R(r-1))}{\ln R}\right)\right]^2\frac{1}{\ln^2 R (r-1)^2}
\\&+k\Phi^{k-1}\left(\frac{\ln(R(r-1))}{\ln R}\right)\Phi''\left(\frac{\ln(R(r-1))}{\ln R}\right)\frac{1}{\ln^2 R (r-1)^2}
\\&+k\Phi^{k-1}\left(\frac{\ln(R(r-1))}{\ln R}\right)\Phi'\left(\frac{\ln(R(r-1))}{\ln R}\right)\frac{N-2}{\ln R (r-1)^2},
\end{split}\end{equation*}where $r=|x|=(x_1^2+x_2^2+...+x_n^2)^\frac{1}{2}$ and $C>0$ is an arbitrary constant.
\\Moreover, from  \eqref{TPQ}, we obtain
\begin{equation*}\label{A0}\begin{split}
|\Delta\xi(r)|&\leq \frac{C}{\ln^2 R (r-1)^2}\Phi^{k-2}\left(\frac{\ln(R(r-1))}{\ln R}\right)
\\&+\frac{C}{\ln R (r-1)^2}\Phi^{k-1}\left(\frac{\ln(R(r-1))}{\ln R}\right),
\end{split}\end{equation*}
which completes the proof.\end{proof}
\begin{lemma}\label{POP} Let $\rho>0,\,p>1$ and
\begin{equation}\label{POP0}
\varphi=\mu(t)\xi(x),
\end{equation}  where the functions $\mu(t), \xi(x)$ are defined by \eqref{T1} and \eqref{CRC1}, respectively.  \\For sufficiently large $T, R$ there holds
\begin{equation}\label{POP1}\begin{split}
&\mathcal{I}_1(\varphi,p)=\int_{Q_T} (|x|-1)^\frac{\rho}{p-1}\varphi^{-\frac{1}{p-1}}|\varphi_t|^\frac{p}{p-1}dx dt \leq CT^{1-\frac{p}{p-1}},
\end{split}\end{equation}
\begin{equation}\label{POP2}\begin{split}
&\mathcal{I}_2(\varphi,p)=\int_{Q_T} (|x|-1)^\frac{\rho}{p-1}\varphi^{-\frac{1}{p-1}}|\varphi_{tt}|^\frac{p}{p-1}dx dt \leq CT^{1-\frac{2p}{p-1}}
\end{split}\end{equation}
and
\begin{equation}\label{POP3}\begin{split}
\mathcal{I}_3(\varphi,p)&=\int_{Q_T} (|x|-1)^\frac{\rho}{p-1}\varphi^{-\frac{1}{p-1}}|\Delta\varphi|^\frac{p}{p-1}dx dt\\&\leq CT\left[\left(\ln R\right)^{-\frac{2p}{p-1}}+\left(\ln R\right)^{-\frac{p}{p-1}}\right]R^{-\frac{1}{2}\left(\frac{\rho-p-1}{p-1}\right)}.
\end{split}\end{equation}
\end{lemma}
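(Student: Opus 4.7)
The plan is to exploit the product form $\varphi=\mu(t)\xi(x)$ to split each $\mathcal{I}_j$ into independent time and space factors. For $\mathcal{I}_1$ and $\mathcal{I}_2$, I will write $\varphi^{-1/(p-1)}|\varphi_t|^{p/(p-1)}=\bigl[\mu^{-1/(p-1)}|\mu'|^{p/(p-1)}\bigr]\xi$ (and similarly with $\mu''$). Direct differentiation of $\mu(t)=(1-t/T)^l$ gives
\[
\mu^{-1/(p-1)}|\mu'|^{p/(p-1)}=CT^{-\frac{p}{p-1}}(1-t/T)^{l-\frac{p}{p-1}},\qquad \mu^{-1/(p-1)}|\mu''|^{p/(p-1)}=CT^{-\frac{2p}{p-1}}(1-t/T)^{l-\frac{2p}{p-1}}.
\]
The hypothesis $l>(p+1)/(p-1)$ is precisely what makes the latter exponent strictly greater than $-1$, so both time integrals converge and produce the factors $T^{1-p/(p-1)}$ and $T^{1-2p/(p-1)}$ respectively. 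The companion space integral $\int_A(|x|-1)^{\rho/(p-1)}\xi(x)\,dx$ is uniformly bounded because $|x|-1\le 1$ and $\xi\le 1$ on the bounded annulus $A$, so \eqref{POP1} and \eqref{POP2} follow immediately.

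For the crucial estimate \eqref{POP3}, I write $\Delta\varphi=\mu(t)\Delta\xi(x)$; the time part collapses to $\mu(t)$ since $p/(p-1)-1/(p-1)=1$, and integrates to $T/(l+1)$. For the space part I apply Lemma \ref{critcal case} to bound $|\Delta\xi|$ by a sum of two terms of the form $(|x|-1)^{-2}(\ln R)^{-j}\Phi^{k-j}$ for $j=1,2$, where $\Phi$ abbreviates $\Phi(\ln(R(|x|-1))/\ln R)$. Using $(a+b)^{p/(p-1)}\le C(a^{p/(p-1)}+b^{p/(p-1)})$ and then dividing by $\xi^{1/(p-1)}=\Phi^{k/(p-1)}$, the residual $\Phi$-powers become $k-2p/(p-1)$ and $k-p/(p-1)$, both nonnegative once $k$ is taken sufficiently large; on the support of $\Delta\xi$, which is the transition shell $\{1+1/R<|x|<1+1/\sqrt R\}$, the bound $\Phi\le 1$ then lets me drop those factors entirely.

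After multiplying by the weight $(|x|-1)^{\rho/(p-1)}$ and passing to polar coordinates with $s=|x|-1$ (using $(1+s)^{N-1}\le C$ on the thin shell), the space integral reduces to
\[
C\left[(\ln R)^{-\frac{2p}{p-1}}+(\ln R)^{-\frac{p}{p-1}}\right]\int_{1/R}^{1/\sqrt R} s^{(\rho-2p)/(p-1)}\,ds.
\]
Setting $\beta=(\rho-2p)/(p-1)$, so that $\beta+1=(\rho-p-1)/(p-1)$, an explicit antiderivative gives $\int_{1/R}^{1/\sqrt R}s^\beta\,ds=O(R^{-(\beta+1)/2})=O(R^{-(\rho-p-1)/(2(p-1))})$ in the regime $\beta>-1$; combined with the time factor $T/(l+1)$ this delivers \eqref{POP3}. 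The main technical obstacle lies in the $\Phi$-power bookkeeping: verifying that after the division by $\xi^{1/(p-1)}$ all exponents of $\Phi$ remain nonnegative (forcing $k$ large enough that $k\ge 2p/(p-1)$), and correctly identifying that the shell integral of $s^\beta$ contributes the stated $R$-power through the endpoint $s=1/\sqrt R$.
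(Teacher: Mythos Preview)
Your proposal is correct and follows essentially the same route as the paper: separate time and space factors, compute the $\mu$-integrals directly (using $l>(p+1)/(p-1)$ for convergence), invoke Lemma~\ref{critcal case} together with $(a+b)^{p/(p-1)}\le C(a^{p/(p-1)}+b^{p/(p-1)})$ for the Laplacian term, use $k>2p/(p-1)$ to discard the residual $\Phi$-powers, and evaluate the resulting shell integral $\int_{1/R}^{1/\sqrt R}s^{(\rho-2p)/(p-1)}\,ds$ via its upper endpoint. Your explicit remark that the hypothesis on $l$ is exactly what guarantees integrability of the $\mu''$-term, and your restriction to the regime $\beta>-1$, are in fact slightly more careful than the paper's own presentation.
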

\begin{proof}[Proof of Lemma \ref{POP}.] In view of \eqref{POP0} it follows that
\begin{equation*}\begin{split}
\mathcal{I}_1(\varphi,p)&=\left(\int_0^T \mu^{-\frac{1}{p-1}}|\mu'|^\frac{p}{p-1} dt\right) \left( \int\limits_{1+\frac{1}{R}<|x|< 2} (|x|-1)^\frac{\rho}{p-1}\xi dx \right).
\end{split}\end{equation*}
Hence, by \eqref{T2} we obtain
\begin{equation}\label{POP4}\begin{split}
 \int\limits_{1+\frac{1}{R}<|x|< 2} (|x|-1)^\frac{\rho}{p-1}\xi dx&\leq C \int\limits_{1+\frac{1}{R}<|x|< 2} (|x|-1)^\frac{\rho}{p-1} dx
\\&\leq  C.
\end{split}\end{equation}
Moreover, taking into account a simple calculation gives
\begin{equation}\label{LP2}\begin{split}
\int_0^T \mu^{-\frac{1}{p-1}}\left|\mu'\right|^\frac{p}{p-1} dt&=\int_0^T \left(1-\frac{t}{T}\right)^{-\frac{l}{p-1}}\left|lT^{-1}\left(1-\frac{t}{T}\right)^{l-1}\right|^\frac{p}{p-1} dt
\\&=CT^{1-\frac{p}{p-1}},
\end{split}\end{equation}which gives the desired result.
\\Consequently, using \eqref{POP4} and the expression
\begin{equation}\label{OPP}\begin{split}
\int_0^T\mu^{-\frac{1}{p-1}}|\mu''|^\frac{p}{p-1} dt&=\int_0^T \left(1-\frac{t}{T}\right)^{-\frac{l}{p-1}}\left|l(l-1)T^{-2}\left(1-\frac{t}{T}\right)^{l-2}\right|^\frac{p}{p-1} dt
\\&=CT^{1-\frac{2p}{p-1}},
\end{split}\end{equation}we obtain \eqref{POP2}.
\\Similarly, the integral $\mathcal{I}_3(\varphi,p)$ can be rewritten as
\begin{equation*}\label{POP7}\begin{split}
&\mathcal{I}_3(\varphi,p)=\left(\int_0^T \mu^{-\frac{1}{p-1}}|\mu|^\frac{p}{p-1} dt\right) \left( \int\limits_{1+\frac{1}{R}<|x|< 1+\frac{1}{\sqrt{R}}} (|x|-1)^\frac{\rho}{p-1}\xi^{-\frac{1}{p-1}}|\Delta\xi|^\frac{p}{p-1} dx \right).
\end{split}\end{equation*}
At this stage, from \eqref{TPQ1} and the following inequality
\begin{equation}\label{inequality}
(a+b)^m\leq 2^{m-1}(a^m+b^m),\,\,\,a\geq0,\,b\geq0,\,m=\frac{p}{p-1},\end{equation} we arrive at
\begin{equation*}\label{A1}\begin{split}
\int\limits_{1+\frac{1}{R}<|x|< 1+\frac{1}{\sqrt{R}}} &(|x|-1)^\frac{\rho}{p-1}\xi^{-\frac{1}{p-1}}|\Delta\xi|^\frac{p}{p-1} dx
\\&\leq C\int\limits_{1+\frac{1}{R}<|x|< 1+\frac{1}{\sqrt{R}}}  (|x|-1)^\frac{\rho}{p-1}\xi^{-\frac{1}{p-1}}\left[\frac{1}{(|x|-1)^2\ln^2 R }\left|\xi^{\frac{k-2}{k}}\right|\right]^\frac{p}{p-1}   dx\\& +C\int\limits_{1+\frac{1}{R}<|x|< 1+\frac{1}{\sqrt{R}}}  (|x|-1)^\frac{\rho}{p-1}\xi^{-\frac{1}{p-1}}\left[\frac{1}{(|x|-1)^2\ln R}\left|\xi^{\frac{k-1}{k}}\right|\right]^\frac{p}{p-1}   dx.
\end{split}\end{equation*}
Using \eqref{T2} and \eqref{CRC1} with $k>\frac{2p}{p-1}$ it follows that
\begin{equation}\label{A2}\begin{split}
\int\limits_{1+\frac{1}{R}<|x|< 1+\frac{1}{\sqrt{R}}} (|x|-1)^\frac{\rho}{p-1}&\left[\frac{1}{(|x|-1)^2\ln^2 R }\right]^\frac{p}{p-1}dx
\\&=\left[\frac{1}{\ln^2R}\right]^\frac{p}{p-1}\int\limits_{1+\frac{1}{R}<|x|< 1+\frac{1}{\sqrt{R}}}(|x|-1)^{\frac{\rho}{p-1}-\frac{2p}{p-1}}dx
\\&\stackrel{|x|=r}{=}\left[\frac{1}{\ln^2R}\right]^\frac{p}{p-1}\int\limits_{1+\frac{1}{R}<r< 1+\frac{1}{\sqrt{R}}} (r-1)^{\frac{\rho}{p-1}-\frac{2p}{p-1}} r^{N-1}dr
\\&\stackrel{r-1=s}{\leq} C\left[\frac{1}{\ln^2R}\right]^\frac{p}{p-1} \int\limits_{\frac{1}{R}<s< \frac{1}{\sqrt{R}}} s^{\frac{\rho}{p-1}-\frac{2p}{p-1}} ds
\\&\leq \left(\ln R\right)^{-\frac{2p}{p-1}} \left(R^{-\frac{1}{2}\left(\frac{\rho-p-1}{p-1}\right)}-R^{-\frac{\rho-p-1}{p-1}}\right)
\\&\leq\left(\ln R\right)^{-\frac{2p}{p-1}} R^{-\frac{1}{2}\left(\frac{\rho-p-1}{p-1}\right)}.
\end{split}\end{equation}
Similarly, one obtains
\begin{equation}\label{A3}\begin{split}
\int\limits_{1+\frac{1}{R}<|x|< 1+\frac{1}{\sqrt{R}}} (|x|-1)^\frac{\rho}{p-1}&\left[\frac{1}{(|x|-1)^2\ln R }\right]^\frac{p}{p-1}dx\leq\left(\ln R\right)^{-\frac{p}{p-1}} R^{-\frac{1}{2}\left(\frac{\rho-p-1}{p-1}\right)}.
\end{split}\end{equation}
It is also easy to verify by a direct calculation that
\begin{equation}\label{LP10}\begin{split}
\int_0^T\mu^{-\frac{1}{p-1}}|\mu|^\frac{p}{p-1} dt&=\int_0^T\left(1-\frac{t}{T}\right)^{-\frac{l}{p-1}}\left|\left(1-\frac{t}{T}\right)^{l}\right|^{\frac{p}{p-1}}dt
\\&=CT.\end{split}\end{equation}
Combining \eqref{A2}, \eqref{A3}  and \eqref{LP10} we obtain \eqref{POP3}.\end{proof}
\begin{lemma}\label{LL2} Let the function $\varphi(t,x)$ be as in \eqref{T0} and  $f\in L^1(\partial B_2)$, then there holds
\begin{equation*}\label{MP1}\begin{split}
\int_\Gamma f\varphi d\sigma dt \leq CT\int_{\partial B_2}f(x)d\sigma,\end{split}\end{equation*}
where $C>0$ independent of $T$.
\end{lemma}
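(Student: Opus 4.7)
The plan is to exploit the fact that the spatial cut-off $\xi$ is identically one on the outer boundary $\partial B_2$, which decouples the surface integral into a product of a one-dimensional time integral and the data integral.

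First, I would observe that $\Gamma = (0,T) \times \partial B_2$, so by the product structure $\varphi(t,x) = \mu(t)\xi(x)$ in \eqref{T0} we may factor
\begin{equation*}
\int_\Gamma f(x)\varphi(t,x)\, d\sigma\, dt = \left(\int_0^T \mu(t)\, dt\right)\left(\int_{\partial B_2} f(x)\xi(x)\, d\sigma\right).
\end{equation*}
Next, I would evaluate $\xi$ on $\partial B_2$: since $|x|=2$, the argument of $\Phi$ in the definition \eqref{CRC1} equals $\frac{\ln R}{\ln R}=1$, and by the definition of $\Phi$ we have $\Phi(1)=1$, hence $\xi(x)=1$ for all $x\in\partial B_2$. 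Therefore the surface factor reduces simply to $\int_{\partial B_2} f(x)\, d\sigma$.

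For the temporal factor, I would use \eqref{T1} to compute directly
\begin{equation*}
\int_0^T \mu(t)\, dt = \int_0^T \left(1-\frac{t}{T}\right)^l dt = \frac{T}{l+1},
\end{equation*}
which is bounded by $CT$ with $C=1/(l+1)$ independent of $T$. Combining the two factors yields the claimed estimate.

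There is no real obstacle here; the only point requiring care is checking that the logarithmic argument in \eqref{CRC1} truly attains the value at which $\Phi \equiv 1$ on $\partial B_2$, and verifying that the $l$-dependent constant is admissible (which it is, as $l$ is a fixed parameter chosen in \eqref{T1}).
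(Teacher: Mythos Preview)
Your proof is correct and follows essentially the same argument as the paper: factor the boundary integral via $\varphi(t,x)=\mu(t)\xi(x)$, use that $\xi\equiv 1$ on $\partial B_2$ (the paper cites \eqref{T2} directly, while you verify it through the explicit formula \eqref{CRC1}), and compute $\int_0^T\mu(t)\,dt=\frac{T}{l+1}$. In fact you obtain an equality with $C=\frac{1}{l+1}$, which is exactly what the paper records and later uses as a lower bound as well.
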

\begin{proof}
From the properties of the test function \eqref{T0}-\eqref{T2}, it yields that

\begin{equation*}\label{TF10}\begin{split}
\int_\Gamma f\varphi d\sigma dt =\int_\Gamma f(x)\mu(t)\xi(x) d\sigma dt&=\left(\int_0^T\left(1-\frac{t}{T}\right)^ldt\right)\left(\int_{\partial B_2}f(x)\xi(x)d\sigma\right)
\\&\leq CT\left(\int_{\partial B_2}f(x)d\sigma\right),
\end{split}\end{equation*}
which completes the proof.
\end{proof}
Next, we introduce the function $\psi(t,x)$ for sufficiently large $T,R$ such that
\begin{equation}\label{S0}
\psi(t,x)=\psi_1(t)\psi_2(x)=\mu(t)H(x)\xi(x),\,\,\,t\in(0,T),\,x\in A,\end{equation}where the functions $\mu(t), \xi(x)$ are given as in \eqref{T1} and \eqref{T2}, respectively.
\\Consequently, the function $H$ defined in $A$ by
\begin{equation}\label{HL1}
H(x)=\left\{\begin{array}{l}
\ln2-\ln|x|,\,\,\,\,\,\,\,\,\,\,\,\,\,\,\,\text{if}\,\,\,\,N=2,\\
2^{N-2}|x|^{2-N}-1,\,\,\,\text{if}\,\,\,\,N\geq3.\end{array}\right.
\end{equation} In addition, $H$ is nonnegative, harmonic and satisfies the condition $H|_{\partial B_2}=0.$

\begin{lemma}\label{LS90}For the function $\psi(t,x)$ on $\Gamma$ the following inequality
\begin{equation*}\label{S1}
\frac{\partial\psi}{\partial\nu}(t,x)=-C(N)\mu(t)\leq0\end{equation*}
holds true, where
\begin{equation}\label{CNN}
C(N)=\left\{\begin{array}{l}
2^{-1},\,\,\,\,\,\,\,\,\,\,\,\,\,\,\,\,\,\,\,\,\,\,\,\,\text{if}\,\,\,\,N=2,\\
2^{-1}(N-2),\,\,\,\text{if}\,\,\,\,N\geq3. \end{array}\right.\end{equation}\end{lemma}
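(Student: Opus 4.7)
The plan is to reduce the boundary computation to an evaluation of $H'(r)$ at $r=2$, using the key facts that $\xi\equiv 1$ near $|x|=2$ and that $H$ vanishes on $\partial B_2$. The outward unit normal to $A=\{1<|x|\le 2\}$ on the outer sphere $\partial B_2$ is $\nu = x/|x|$, so for a radial function $w(r)=w(|x|)$ we have $\tfrac{\partial w}{\partial\nu}=w'(r)$.

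First I would observe that by the support condition \eqref{T2}, $\xi(x)=1$ for $1+\tfrac{1}{\sqrt{R}}<|x|\le 2$; in particular, viewing $\xi$ as a radial function $\xi(r)$, one has $\xi(2)=1$ and $\xi'(2)=0$. Next, directly from \eqref{HL1} one verifies that $H(2)=0$ for both $N=2$ (since $\ln 2-\ln 2=0$) and $N\ge 3$ (since $2^{N-2}\cdot 2^{2-N}-1=0$). Using $\psi(t,x)=\mu(t)H(x)\xi(x)$ and Leibniz,
\begin{equation*}
\frac{\partial\psi}{\partial\nu}(t,x)\Big|_{\Gamma}=\mu(t)\,\bigl(H'(r)\xi(r)+H(r)\xi'(r)\bigr)\Big|_{r=2}=\mu(t)\,H'(2),
\end{equation*}
since the second summand vanishes because $H(2)=0$ and the first reduces to $H'(2)\cdot 1$ because $\xi(2)=1$.

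It then remains to evaluate $H'(2)$ case-by-case. For $N=2$, $H'(r)=-1/r$, so $H'(2)=-1/2=-C(N)$. For $N\ge 3$, $H'(r)=2^{N-2}(2-N)r^{1-N}$, whence $H'(2)=(2-N)/2=-2^{-1}(N-2)=-C(N)$. In both cases, $\tfrac{\partial\psi}{\partial\nu}(t,x)=-C(N)\mu(t)$ on $\Gamma$, which is nonpositive because $\mu(t)=(1-t/T)^l\ge 0$ on $(0,T)$ by \eqref{T1}. There is no real obstacle here; the only point that requires care is the orientation of $\nu$ (outward from $A$ at $|x|=2$ coincides with the radial direction) and confirming that $\xi$ is truly constant in a neighborhood of $r=2$ so that $\xi'(2)=0$, both of which follow immediately from the definitions.
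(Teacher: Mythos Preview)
Your proof is correct and follows essentially the same approach as the paper: apply the product rule to $\psi_2=H\xi$, use that $\xi\equiv 1$ (hence $\xi'=0$) near $|x|=2$ so the normal derivative reduces to that of $H$, and then evaluate $H'(2)$ explicitly. The only cosmetic difference is that you invoke $H(2)=0$ to kill the cross term $H\,\xi'$, whereas the paper kills it via $\nabla\xi=0$ on the outer shell; either suffices.
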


\begin{proof}[Proof of Lemma \ref{LS90}.] Using the fact that $\nabla\xi(x)=0$ with $\xi(x)=1$ on $ 1+\frac{1}{\sqrt{R}}<|x|\leq2$ from \eqref{T2}, we arrive at
\begin{equation*}\label{S2}\begin{split}
\nabla\psi_2(x)&=\nabla\left(H(x)\xi(x)\right)
\\&=H(x)\nabla\xi(x)+\xi(x)\nabla H(x)
\\&=\nabla H(x). \end{split}\end{equation*}
Due to \eqref{HL1} for $x\in\partial B_2$, it yields
\begin{equation*}\label{S3}
\frac{\partial\psi_2}{\partial\nu}(x)=-C(N),\end{equation*}
where
\begin{equation*}\label{CN}
C(N)=\left\{\begin{array}{l}
2^{-1},\,\,\,\,\,\,\,\,\,\,\,\,\,\,\,\,\,\,\,\,\,\,\,\,\text{if}\,\,\,\,N=2,\\
2^{-1}(N-2),\,\,\,\text{if}\,\,\,\,N\geq3. \end{array}\right.\end{equation*}
Finally, in view of \eqref{S0} we conclude that
\begin{equation*}\label{S4}
\frac{\partial\psi}{\partial\nu}(t,x)=-C(N)\mu(t)=\left\{\begin{array}{l}
2^{-1}\mu(t)\leq0,\,\,\,\,\,\,\,\,\,\,\,\,\,\,\,\,\,\,\,\,\,\,\,\,\text{if}\,\,\,\,N=2,\\
2^{-1}(N-2)\mu(t)\leq0,\,\,\,\text{if}\,\,\,\,N\geq3\end{array}\right.\end{equation*}
holds for all $(t,x)\in \Gamma$.\end{proof}
\begin{lemma}\label{DWQ} Let $\rho>0,\,p>1$ and $\psi(t,x)$
as in \eqref{S0}.  For sufficiently large $T, R$ we have
\begin{equation}\label{DWQ1}\begin{split}
&\mathcal{J}_1(\psi,p)=\int_{Q_T} (|x|-1)^\frac{\rho}{p-1}\psi^{-\frac{1}{p-1}}|\psi_t|^\frac{p}{p-1}dx dt \leq CT^{1-\frac{p}{p-1}},
\end{split}\end{equation}
\begin{equation}\label{DWQ2}\begin{split}
&\mathcal{J}_2(\psi,p)=\int_{Q_T} (|x|-1)^\frac{\rho}{p-1}\psi^{-\frac{1}{p-1}}|\psi_{tt}|^\frac{p}{p-1}dx dt \leq CT^{1-\frac{2p}{p-1}}
\end{split}\end{equation}
and
\begin{equation}\label{DWQ3}\begin{split}
\mathcal{J}_3(\psi,p)&=\int_{Q_T} (|x|-1)^\frac{\rho}{p-1}\psi^{-\frac{1}{p-1}}|\Delta\psi|^\frac{p}{p-1}dx dt
\\&\leq CT\left[\left(\ln R\right)^{-\frac{2p}{p-1}}R^{-\frac{1}{2}\left(\frac{\rho-p-1}{p-1}\right)}+\left(\ln R\right)^{-\frac{p}{p-1}} R^{-\frac{1}{2}\left(\frac{\rho-1}{p-1}\right)}\right].
\end{split}\end{equation}
\end{lemma}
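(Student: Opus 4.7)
The strategy follows the template of Lemma \ref{POP}, exploiting the product structure $\psi=\mu(t)H(x)\xi(x)$ together with two features of $H$: it is bounded above on $A$ and bounded below by a positive constant on the support of $\xi$ (since $H$ extends continuously to $|x|=1$ with a strictly positive limit in both cases of \eqref{HL1}), and it is harmonic. The first fact lets the factors $H^{\pm 1/(p-1)}$ and $|\nabla H|$ be absorbed into constants wherever they appear, while the second eliminates $\Delta H$ from the Leibniz expansion of $\Delta(H\xi)$.

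For \eqref{DWQ1} and \eqref{DWQ2} there is essentially nothing new. Since $\psi_t=\mu'(t)H(x)\xi(x)$, the spatial factor $H\xi$ separates out of $\psi^{-1/(p-1)}|\psi_t|^{p/(p-1)}$ and one is left with the time integral computed in \eqref{LP2} multiplied by $\int_A(|x|-1)^{\rho/(p-1)}H\xi\,dx$, which is bounded by a constant because $H,\xi\le C$ on $A$ and $\rho/(p-1)>-1$. The same argument with $\mu''$ replacing $\mu'$ and the time integral \eqref{OPP} handles $\mathcal{J}_2$.

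For \eqref{DWQ3}, harmonicity gives $\Delta(H\xi)=2\nabla H\cdot\nabla\xi+H\Delta\xi$, so by \eqref{inequality}
\[
|\Delta\psi|^{p/(p-1)}\le C\mu^{p/(p-1)}\bigl(|\nabla H\cdot\nabla\xi|^{p/(p-1)}+H^{p/(p-1)}|\Delta\xi|^{p/(p-1)}\bigr).
\]
In the $H\Delta\xi$ piece, the bounded factor $H$ cancels against $H^{-1/(p-1)}$ from $\psi^{-1/(p-1)}$, and the remaining spatial integral is structurally identical to $\mathcal{I}_3(\varphi,p)$ of Lemma \ref{POP}, producing the first summand of \eqref{DWQ3}. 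For the cross term, $|\nabla H|\le C$ on $A$ and a direct differentiation of \eqref{CRC1} yields $|\nabla\xi|\le C\Phi^{k-1}/((r-1)\ln R)$. Using the positive lower bound on $H$ on the support of $\nabla\xi$ to absorb $H^{-1/(p-1)}$, and choosing $k$ large enough that the residual power $\Phi^{k-p/(p-1)}$ is $\le 1$, the spatial integral is reduced to
\[
C(\ln R)^{-p/(p-1)}\int_{1/R}^{1/\sqrt R}s^{(\rho-p)/(p-1)}\,ds\le C(\ln R)^{-p/(p-1)}R^{-(\rho-1)/(2(p-1))},
\]
after the substitution $s=|x|-1$. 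Combined with the time factor $\int_0^T\mu\,dt=O(T)$ from \eqref{LP10}, this gives the second summand of \eqref{DWQ3}.

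The main obstacle is the same cutoff bookkeeping as in Lemma \ref{POP}: one must pick $k$ large enough (concretely $k>2p/(p-1)$, dictated by the worst term in \eqref{TPQ1}) so that every residual power of $\Phi$ arising after the combination $\psi^{-1/(p-1)}|\cdot|^{p/(p-1)}$ is nonnegative and can be dropped. The one conceptually new point is that the Leibniz cross term $\nabla H\cdot\nabla\xi$ carries only one power of $(r-1)^{-1}$, as opposed to two in $\Delta\xi$; this single extra power of $(r-1)$ in the integrand is precisely what shifts the radial exponent after integration from $-(\rho-p-1)/(2(p-1))$ to the new $-(\rho-1)/(2(p-1))$ appearing in the second summand of \eqref{DWQ3}, and crucially relies on the positive lower bound of $H$ on the support of $\nabla\xi$, which holds because that support sits in $\{1<|x|<1+1/\sqrt R\}$, away from the zero set $\partial B_2$ of $H$.
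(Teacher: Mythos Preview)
Your approach is essentially the same as the paper's: factor $\psi=\mu H\xi$, use the upper and lower bounds on $H$ on $\operatorname{supp}\xi$ (recorded in the paper as \eqref{HLL} and \eqref{GH}) together with harmonicity to write $\Delta(H\xi)=H\Delta\xi+2\nabla H\cdot\nabla\xi$, and then handle the two pieces exactly as you describe. The paper treats the $H\Delta\xi$ term by the same reduction to the $\mathcal{I}_3$ computation and evaluates the cross term via the same one-variable integral you wrote (its \eqref{DWQ7}), arriving at the second summand of \eqref{DWQ3} in the same way.
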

\begin{proof}[Proof of Lemma \ref{DWQ}.] Taking into account \eqref{S0}, we obtain
\begin{equation*}\begin{split}
\mathcal{J}_1(\psi,p)&=\left(\int_0^T \mu^{-\frac{1}{p-1}}|\mu'|^\frac{p}{p-1} dt\right) \left( \int\limits_{1+\frac{1}{R}<|x|< 2} (|x|-1)^\frac{\rho}{p-1}H(x)\xi(x) dx \right).
\end{split}\end{equation*}
On other hand, by \eqref{HL1} it can be seen that
\begin{equation}\label{HLL}
 H(x)\leq C\,\,\,\text{for}\,\,\, 1+\frac{1}{R}<|x|\leq2.
\end{equation}
Consequently, due to \eqref{POP4} and \eqref{LP2} we get \eqref{DWQ1}.
\\Moreover, using the estimates \eqref{POP4}, \eqref{OPP} and noting \eqref{HLL} we deduce that \eqref{DWQ2}.
\\Next, the integral $\mathcal{J}_3(\psi,p)$ can be rewritten as
\begin{equation*}\label{PDWQ7}\begin{split}
&\mathcal{I}_3(\psi,p)=\left(\int_0^T \mu^{-\frac{1}{p-1}}|\mu|^\frac{p}{p-1} dt\right) \left( \int\limits_{1+\frac{1}{R}<|x|< 1+\frac{1}{\sqrt{R}}} (|x|-1)^\frac{\rho}{p-1}(H(x)\xi(x))^{-\frac{1}{p-1}}|\Delta(H(x)\xi(x))|^\frac{p}{p-1} dx \right).
\end{split}\end{equation*}
Moreover, by \eqref{HL1}, we get
\begin{equation}\label{GH}
H^{-\frac{1}{p-1}}(x)\leq C, |H(x)|\leq C, |\nabla H(x)|\leq C,\,\,\,1+\frac{1}{R}<x<1+\frac{1}{\sqrt{R}}.
\end{equation}
Therefore, taking into account that the function $H(x)$ is harmonic with \eqref{GH} and the remaining \eqref{TPQ1}, we arrive at
\begin{equation*}\begin{split}
|\Delta\psi_2(x)|=|\Delta\left[H(x)\xi(x)\right]|&\leq H(x)|\Delta\left[\xi(x)\right]|+2|\nabla H(x)||\nabla\xi(x)|
\\&\leq \frac{CH(x)}{ (|x|-1)^2\ln^2 R}\Phi^{k-2}\left(\frac{\ln(R(|x|-1))}{\ln R}\right)
\\&+\frac{CH(x)}{(|x|-1)^2\ln R }\Phi^{k-1}\left(\frac{\ln(R(|x|-1))}{\ln R}\right)
\\&+\frac{C}{(|x|-1)\ln R}\Phi^{k-1}\left(\frac{\ln(R(|x|-1))}{\ln R}\right)
\\&\leq  C\left(\frac{1}{(|x|-1)^2\ln^2 R }\xi^{\frac{k-2}{k}}\left(x\right)+\frac{1}{(|x|-1)\ln R}\xi^{\frac{k-1}{k}}\left(x\right)\right).\end{split}\end{equation*}
Using the inequality \eqref{inequality}, there holds
\begin{equation*}\begin{split}
\int\limits_{1+\frac{1}{R}<|x|< 1+\frac{1}{\sqrt{R}}} &(|x|-1)^\frac{\rho}{p-1}\xi^{-\frac{1}{p-1}}|\Delta\xi|^\frac{p}{p-1} dx
\\&\leq C\int\limits_{1+\frac{1}{R}<|x|< 1+\frac{1}{\sqrt{R}}}  (|x|-1)^\frac{\rho}{p-1}\xi^{-\frac{1}{p-1}}\left[\frac{1}{(|x|-1)^2\ln^2 R }\left|\xi^{\frac{k-2}{k}}\right|\right]^\frac{p}{p-1}   dx\\& +C\int\limits_{1+\frac{1}{R}<|x|< 1+\frac{1}{\sqrt{R}}}  (|x|-1)^\frac{\rho}{p-1}\xi^{-\frac{1}{p-1}}\left[\frac{1}{(|x|-1)\ln R}\left|\xi^{\frac{k-1}{k}}\right|\right]^\frac{p}{p-1}   dx.\end{split}\end{equation*}
Then, noting that $k>\frac{2p}{p-1}$ and \eqref{T2} we arrive at
\begin{equation*}\begin{split}
\int\limits_{1+\frac{1}{R}<|x|< 1+\frac{1}{\sqrt{R}}} (|x|-1)^\frac{\rho}{p-1}&\xi^{-\frac{1}{p-1}}|\Delta\xi|^\frac{p}{p-1} dx
\\&\leq C\int\limits_{1+\frac{1}{R}<|x|< 1+\frac{1}{\sqrt{R}}} (|x|-1)^\frac{\rho}{p-1}\left[\frac{1}{(|x|-1)^2\ln^2 R }\right]^\frac{p}{p-1}dx
\\&+C\int\limits_{1+\frac{1}{R}<|x|< 1+\frac{1}{\sqrt{R}}}(|x|-1)^\frac{\rho}{p-1}\left[\frac{1}{(|x|-1)\ln R}\right]^\frac{p}{p-1}dx.\end{split}\end{equation*}
At this stage, we calculate the last integral in the following form 
\begin{equation}\label{DWQ7}\begin{split}
\int\limits_{1+\frac{1}{R}<|x|< 1+\frac{1}{\sqrt{R}}}(|x|-1)^\frac{\rho}{p-1}&\left[\frac{1}{(|x|-1)\ln R}\right]^\frac{p}{p-1}dx
\\&=\left[\frac{1}{\ln R}\right]^\frac{p}{p-1}\int\limits_{1+\frac{1}{R}<|x|< 1+\frac{1}{\sqrt{R}}}(|x|-1)^{\frac{\rho}{p-1}-\frac{p}{p-1}}dx
\\&\stackrel{|x|=r}{=}\left[\frac{1}{\ln R}\right]^\frac{p}{p-1}\int\limits_{1+\frac{1}{R}<r< 1+\frac{1}{\sqrt{R}}} (r-1)^{\frac{\rho}{p-1}-\frac{p}{p-1}} r^{N-1}dr
\\&\stackrel{r-1=s}{\leq} C\left[\frac{1}{\ln R}\right]^\frac{p}{p-1} \int\limits_{\frac{1}{R}<s< \frac{1}{\sqrt{R}}} s^{\frac{\rho}{p-1}-\frac{p}{p-1}} ds
\\&\leq \left(\ln R\right)^{-\frac{p}{p-1}} \left(R^{-\frac{1}{2}\left(\frac{\rho-1}{p-1}\right)}-R^{-\frac{\rho-1}{p-1}}\right)
\\&\leq\left(\ln R\right)^{-\frac{p}{p-1}} R^{-\frac{1}{2}\left(\frac{\rho-1}{p-1}\right)}.
\end{split}\end{equation}
Combining \eqref{A2}, \eqref{DWQ7} and \eqref{LP10} we obtain \eqref{DWQ3}, which completes the proof.\end{proof}

\subsection{Proof of Theorems} At this stage, we will prove the main theorems in the following order.
\begin{proof}[Proof of Theorem \ref{TT1}.]\textbf{(i) The problem \eqref{P1} with the Neumann boundary condition \eqref{P2}.} In view of Definition \ref{WS1}, we have
\begin{equation}\label{PT1}\begin{split}
&\int_{Q_T}(|x|-1)^{-\rho}|u|^{p}\varphi  dx dt +\int_\Gamma f\varphi d\sigma dt
\leq \int_{Q_T} |u||\varphi_t|dx dt+\int_{Q_T} |u||\Delta\varphi| dx dt.
\end{split}\end{equation}
From H\"{o}lder's inequality, it follows that
\begin{equation*}\begin{split}
 \int_{Q_T} |u||\varphi_t|dx dt\leq\biggl(\int_{Q_T} (|x|-1)^{-\rho}|u|^p\varphi dx dt\biggr)^\frac{1}{p}\biggl(\,\,\underbrace{\int_{Q_T} (|x|-1)^\frac{\rho}{p-1}\varphi^{-\frac{1}{p-1}}|\varphi_t|^\frac{p}{p-1}dx dt}_{\mathcal{I}_1(\varphi,p)}\,\,\biggr)^\frac{p-1}{p}
\end{split}\end{equation*}
and
\begin{equation*}\label{PL4}\begin{split}
\int_{Q_T} |u||\Delta\varphi| dx dt\leq \biggl(\int_{Q_T} (|x|-1)^{-\rho}|u|^p\varphi dx dt\biggr)^\frac{1}{p}\biggl(\,\,\underbrace{\int_{Q_T} (|x|-1)^\frac{\rho}{p-1}\varphi^{-\frac{1}{p-1}}|\Delta\varphi|^\frac{p}{p-1}dx dt}_{\mathcal{I}_3(\varphi,p)}\,\,\biggr)^\frac{p-1}{p}.
\end{split}\end{equation*}
Then, using the $\varepsilon$-Young inequality with $\displaystyle\varepsilon=\frac{p}{3}$ in the previous estimates, we rewrite \eqref{PT1} as follows
\begin{equation}\label{PL5}\begin{split}
&\int_\Gamma f\varphi d\sigma dt\leq C(p)(\mathcal{I}_1(\varphi,p)+\mathcal{I}_3(\varphi,p)),
\end{split}\end{equation}where $\displaystyle C(p)=\frac{p-1}{p}\left(\frac{p}{3}\right)^{-\frac{1}{p-1}}.$
\\According to the results of Lemma \ref{POP} and Lemma \ref{LL2}, we get
\begin{equation*}\begin{split}
&C\int_{\partial B_2}f(x)d\sigma\leq C(p)\left(T^{-\frac{p}{p-1}}+\left[\left(\ln R\right)^{-\frac{2p}{p-1}}+\left(\ln R\right)^{-\frac{p}{p-1}}\right]R^{-\frac{1}{2}\left(\frac{\rho-p-1}{p-1}\right)}\right). \end{split}\end{equation*}
Hence, choosing $R=T$ and passing to the limit $T\to\infty$ taking account $\rho=p+1$, we get a contradiction with $$\int_{\partial B_2}f(x)d\sigma>0,$$which proves our assumption.
\\\textbf{(i)} \textbf{The problem \eqref{P1} with the Dirichlet boundary condition \eqref{P3}.}  Assume that $u\in L^p({Q_T})$ is a global weak solution to \eqref{P1}-\eqref{P3}. Then, using Definition \ref{WS2} and acting in the same way as in the above case, we arrive at
\begin{equation*}\label{CPL}\begin{split}
&-\int_\Gamma f\frac{\partial\psi}{\partial\nu} d\sigma dt\leq C(p)(\mathcal{J}_1(\psi,p)+\mathcal{J}_3(\psi,p)).
\end{split}\end{equation*}
\\Hence, using \eqref{S0} and Lemma \ref{LS90} on the left-hand side of the last inequality, we obtain
\begin{equation*}\label{DP2}\begin{split}
\int_\Gamma f\frac{\partial \psi}{\partial \nu}d\sigma dt&=\left(\int_0^T\left(1-\frac{t}{T}\right)^ldt\right)\left(\int_{\partial B_2}f(x)d\sigma\right)
\\&\leq-\frac{C(N)}{l+1}T\left(\int_{\partial B_2}f(x)d\sigma\right)\\&\leq-C_1T\left(\int_{\partial B_2}f(x)d\sigma\right).\end{split}\end{equation*}
\\From Lemma \ref{DWQ}, we get
\begin{equation*}\label{CPL2}\begin{split}
&C_1\int_{\partial B_2}f(x)d\sigma\leq C\left(T^{-\frac{p}{p-1}}+\left(\ln R\right)^{-\frac{2p}{p-1}}R^{-\frac{1}{2}\left(\frac{\rho-p-1}{p-1}\right)}+\left(\ln R\right)^{-\frac{p}{p-1}} R^{-\frac{1}{2}\left(\frac{\rho-1}{p-1}\right)}\right). \end{split}\end{equation*}
Next,  changing $R=T$ and passing $T\to\infty$ noting that $\rho=p+1$, we get a contradiction with $$\int_{\partial B_2}f(x)d\sigma>0.$$
\\\textbf{(ii) The problem \eqref{H1}  with the Neumann boundary condition \eqref{P2}}. We proceed as in the proof of part $\textbf{(i)}.$ Namely, suppose that $u\in L^p_{\text{loc}}({Q_T})$ is a global weak solution to problem \eqref{H1} under the boundary condition \eqref{P2}. Then, in view of Definition \ref{WS3} and the $\varepsilon$-Young inequality with $\displaystyle\varepsilon=\frac{p}{3}$, it follows that
\begin{equation}\label{HNC}\begin{split}
&\int_\Gamma f\varphi d\sigma dt\leq C(p)(\mathcal{I}_2(\varphi,p)+\mathcal{I}_3(\varphi,p)),
\end{split}\end{equation}where $\displaystyle C(p)=\frac{p-1}{p}\left(\frac{p}{3}\right)^{-\frac{1}{p-1}}.$
\\From Lemma \ref{POP} and Lemma \ref{LL2} we have
\begin{equation*}\begin{split}
&C\int_{\partial B_2}f(x)d\sigma\leq C(p)\left(T^{-\frac{2p}{p-1}}+\left[\left(\ln R\right)^{-\frac{2p}{p-1}}+\left(\ln R\right)^{-\frac{p}{p-1}}\right]R^{-\frac{1}{2}\left(\frac{\rho-p-1}{p-1}\right)}\right).
\end{split}\end{equation*} Choosing $R=T$ and passing to the limit $T\to\infty$ taking account $\rho=p+1$, we get a contradiction with $$\int_{\partial B_2}f(x)d\sigma>0.$$
\\\textbf{(ii) The problem \eqref{H1} with the Dirichlet boundary condition \eqref{P3}}. At this stage, noting Definition \ref{WS4}, Lemma \ref{POP} and Lemma \ref{LS90} and applying the same argument as in the proof of the assertion (i), we obtain\begin{equation}\label{HDC}\begin{split}
&C_1\int_{\partial B_2}f(x)d\sigma \leq C\left(T^{-\frac{2p}{p-1}}+\left(\ln R\right)^{-\frac{2p}{p-1}}R^{-\frac{1}{2}\left(\frac{\rho-p-1}{p-1}\right)}+\left(\ln R\right)^{-\frac{p}{p-1}} R^{-\frac{1}{2}\left(\frac{\rho-1}{p-1}\right)}\right),
\end{split}\end{equation}where $\displaystyle C>0.$
\\Choosing $R=T$ and passing to the limit $T\to\infty$ in view of $\rho=p+1$, we get a contradiction with $$\int_{\partial B_2}f(x)d\sigma>0,$$which completes the proof.\end{proof}

\begin{proof}[Proof of Theorem \ref{TT2}.]
\textbf{(i) The problem \eqref{SP1}  with the Neumann boundary condition \eqref{SP3}}. Assume that $(u,v)\in L^p_{\text{loc}}({Q_T})\times L^q_{\text{loc}}({Q_T})$ is a global weak solution to problem \eqref{SP1}-\eqref{SP3}. Due to Definition \ref{WS5} we have
\begin{equation}\label{PG01}\begin{split}
&\int_{Q_T}(|x|-1)^{-\rho}|u|^{p}\varphi  dx dt+\int_\Gamma g\varphi d\sigma dt\leq \int_{Q_T} |v||\varphi_{t}|dxdt+\int_{Q_T} |v||\Delta\varphi| dx dt,
\end{split}\end{equation}
\begin{equation}\label{PG02}\begin{split}
&\int_{Q_T}(|x|-1)^{-\rho}|v|^{q}\varphi dx dt+\int_\Gamma f\varphi d\sigma dt\leq \int_{Q_T} |u||\varphi_{t}|dxdt+\int_{Q_T} |u||\Delta\varphi| dx dt.
\end{split}\end{equation}
By H\"{o}lder's inequality, we deduce that
\begin{equation}\label{PW3}\begin{split}
 \int_{Q_T} |v||\varphi_{t}|dxdt\leq\left(\int_{Q_T}(|x|-1)^{-\rho}|v|^{q}\varphi  dx dt\right)^\frac{1}{q}\left(\mathcal{I}_1(\varphi,q)\right)^\frac{q-1}{q}
\end{split}\end{equation}
and
\begin{equation}\label{PW4}\begin{split}
\int_{Q_T} |v||\Delta\varphi| dx dt\leq\left(\int_{Q_T}(|x|-1)^{-\rho}|v|^{q}\varphi  dx dt\right)^\frac{1}{q}\left(\mathcal{I}_3(\varphi,q)\right)^\frac{q-1}{q}.
\end{split}\end{equation}
Therefore, we obtain
\begin{equation}\label{PW1}\begin{split}
 \int_{Q_T} |u||\varphi_{t}|dxdt\leq\left(\int_{Q_T}(|x|-1)^{-\rho}|u|^{p}\varphi  dx dt\right)^\frac{1}{p}\left(\mathcal{I}_1(\varphi,p)\right)^\frac{p-1}{p}
\end{split}\end{equation}
and
\begin{equation}\label{PW2}\begin{split}
\int_{Q_T} |u||\Delta\varphi| dx dt\leq\left(\int_{Q_T}(|x|-1)^{-\rho}|u|^{p}\varphi  dx dt\right)^\frac{1}{p}\left(\mathcal{I}_3(\varphi,p)\right)^\frac{p-1}{p}.
\end{split}\end{equation}
Consequently, by \eqref{PW3} and \eqref{PW4} one obtains
\begin{equation}\label{ss1}\begin{split}
\int_{Q_T}(|x|-1)^{-\rho}|u|^{p}\varphi  dx dt+\int_\Gamma g\varphi d\sigma dt&\leq \left(\int_{Q_T}(|x|-1)^{-\rho}|v|^{q}\varphi  dx dt\right)^\frac{1}{q}\\&\times\left[\left(\mathcal{I}_1(\varphi,q)\right)^\frac{q-1}{q}+\left(\mathcal{I}_3(\varphi,q)\right)^\frac{q-1}{q}\right]
\end{split}\end{equation}and from \eqref{PW1} and \eqref{PW2} it follows that
\begin{equation}\label{ss2}\begin{split}
\int_{Q_T}(|x|-1)^{-\rho}|v|^{q}\varphi dx dt+\int_\Gamma f\varphi d\sigma dt&\leq \left(\int_{Q_T}(|x|-1)^{-\rho}|u|^{p}\varphi  dx dt\right)^\frac{1}{p}\\&\times\left[\left(\mathcal{I}_1(\varphi,p)\right)^\frac{p-1}{p}+\left(\mathcal{I}_3(\varphi,p)\right)^\frac{p-1}{p}\right].
\end{split}\end{equation}
In view of \eqref{PG01}, \eqref{ss1} and \eqref{ss2}, we arrive at
\begin{equation}\label{ss3}\begin{split}
&\int_{Q_T}(|x|-1)^{-\rho}|u|^{p}\varphi  dx dt+\int_\Gamma g\varphi d\sigma dt\\&\leq \left(\int_{Q_T}(|x|-1)^{-\rho}|u|^{p}\varphi  dx dt\right)^\frac{1}{pq}
\left[\left(\mathcal{I}_1(\varphi,p)\right)^\frac{p-1}{p}+\left(\mathcal{I}_3(\varphi,p)\right)^\frac{p-1}{p}\right]^\frac{1}{q}\\&\times\left[\left(\mathcal{I}_1(\varphi,q)\right)^\frac{q-1}{q}+\left(\mathcal{I}_3(\varphi,q)\right)^\frac{q-1}{q}\right].
\end{split}\end{equation}
Similarly, from \eqref{PG02}, \eqref{ss1} and \eqref{ss2}, we have
\begin{equation}\label{ss4}\begin{split}
&\int_{Q_T}(|x|-1)^{-\rho}|v|^{q}\varphi  dx dt+\int_\Gamma f\varphi d\sigma dt\leq \left(\int_{Q_T}(|x|-1)^{-\rho}|v|^{q}\varphi  dx dt\right)^\frac{1}{pq}
\\&\times\left[\left(\mathcal{I}_1(\varphi,q)\right)^\frac{q-1}{q}+\left(\mathcal{I}_3(\varphi,q)\right)^\frac{q-1}{q}\right]^\frac{1}{p}\left[\left(\mathcal{I}_1(\varphi,p)\right)^\frac{p-1}{p}+\left(\mathcal{I}_3(\varphi,p)\right)^\frac{p-1}{p}\right].
\end{split}\end{equation}
Using the $\varepsilon$-Young inequality
$$XY\leq \frac{\varepsilon}{k} X^p+\frac{1}{k'\varepsilon^{k'-1}}Y^{k'},\,\, \frac{1}{k}+\frac{1}{k'}=1,\,\,\, X,Y,\varepsilon>0,$$
in the right-side of \eqref{ss3}, \eqref{ss4} with $k=pq, \displaystyle k'=\frac{pq}{pq-1}$ and $\displaystyle\varepsilon=\frac{pq}{2}$, we obtain the following estimates
\begin{equation*}\begin{split}
\int_\Gamma g\varphi d\sigma dt\leq C(pq)
\left[\mathbf{A}(\varphi,p)\right]^\frac{p}{pq-1}\left[\mathbf{B}(\varphi,q)\right]^\frac{pq}{pq-1}
\end{split}\end{equation*} and
\begin{equation*}\begin{split}
\int_\Gamma f\varphi d\sigma dt\leq C(pq)
\left[\mathbf{B}(\varphi,q)\right]^\frac{q}{pq-1}\left[\mathbf{A}(\varphi,p)\right]^\frac{pq}{pq-1},
\end{split}\end{equation*}where
\begin{equation*}\begin{split}
\mathbf{A}(\varphi,p):=\left(\mathcal{I}_1(\varphi,p)\right)^\frac{p-1}{p}+\left(\mathcal{I}_3(\varphi,p)\right)^\frac{p-1}{p},\end{split}\end{equation*}
\begin{equation*}\begin{split}
\mathbf{B}(\varphi,q):=\left(\mathcal{I}_1(\varphi,q)\right)^\frac{q-1}{q}+\left(\mathcal{I}_3(\varphi,q)\right)^\frac{q-1}{q}.\end{split}\end{equation*}
In view of Lemma \ref{POP}, we have
\begin{equation}\label{E1}\begin{split}
\mathbf{A}(\varphi,p)\leq C \left(T^{-\frac{1}{p}}+\mathcal{C}(\ln R)T^\frac{p-1}{p}R^{-\frac{1}{2}\left(\frac{\rho-p-1}{p}\right)}\right)\end{split}\end{equation}
and
\begin{equation}\label{E2}\begin{split}
\mathbf{B}(\varphi,q)\leq C\left(T^{-\frac{1}{q}}+\mathcal{C}(\ln R)T^\frac{q-1}{q}R^{-\frac{1}{2}\left(\frac{\rho-q-1}{q}\right)}\right), \end{split}\end{equation}where $\mathcal{C}(\ln R):=\left(\ln R\right)^{-2}+\left(\ln R\right)^{-1}.$
\\Therefore, from \eqref{E1} and \eqref{E2} it follows that
\begin{equation*}\label{E3}\begin{split}
\mathbf{A}(\varphi,p)\left[\mathbf{B}(\varphi,q)\right]^q
&\leq C \left(T^{-\frac{1}{p}}+\mathcal{C}(\ln R)T^\frac{p-1}{p}R^{-\frac{1}{2}\left(\frac{\rho-p-1}{p}\right)}\right) \left(T^{-\frac{1}{q}}+\mathcal{C}(\ln R)T^\frac{q-1}{q}R^{-\frac{1}{2}\left(\frac{\rho-q-1}{q}\right)}\right)^q
\\&= C \left(T^{-\frac{1}{p}}+\mathcal{C}(\ln R)T^\frac{p-1}{p}R^{-\frac{1}{2}\left(\frac{\rho-p-1}{p}\right)}\right) \left(T^{-1}+\mathcal{C}^q(\ln R)T^{q-1}R^{-\frac{1}{2}(\rho-q-1)}\right)
\\&= C \biggl(T^{-\frac{1}{p}-1}+\mathcal{C}^q(\ln R)T^{-\frac{1}{p}+q-1}R^{-\frac{1}{2}(\rho-q-1)}
+\mathcal{C}(\ln R)T^{\frac{p-1}{p}-1}R^{-\frac{1}{2}\left(\frac{\rho-p-1}{p}\right)}
\\&+\mathcal{C}^{q+1}(\ln R)T^{\frac{p-1}{p}-1+q}R^{-\frac{1}{2}(\frac{\rho-p-1}{p}+\rho-q-1)}\biggr).\end{split}\end{equation*}
\\Similarly, one obtains
\begin{equation*}\begin{split}
\mathbf{B}(\varphi,q)\left[\mathbf{A}(\varphi,p)\right]^p
&\leq C \left(T^{-\frac{1}{q}}+\mathcal{C}(\ln R)T^\frac{q-1}{q}R^{-\frac{1}{2}\left(\frac{\rho-q-1}{q}\right)}\right) \left(T^{-\frac{1}{p}}+\mathcal{C}(\ln R)T^\frac{p-1}{p}R^{-\frac{1}{2}\left(\frac{\rho-p-1}{p}\right)}\right)^p
\\&= C \left(T^{-\frac{1}{q}}+\mathcal{C}(\ln R)T^\frac{q-1}{q}R^{-\frac{1}{2}\left(\frac{\rho-q-1}{q}\right)}\right) \left(T^{-1}+\mathcal{C}^p(\ln R)T^{p-1} R^{-\frac{1}{2}\left(\rho-p-1\right)}\right)
\\&= C\biggl(T^{-\frac{1}{q}-1}+\mathcal{C}^p(\ln R)T^{-\frac{1}{q}+p-1}R^{-\frac{1}{2}(\rho-p-1)}+\mathcal{C}(\ln R)T^{\frac{q-1}{q}-1}R^{-\frac{1}{2}\left(\frac{\rho-q-1}{q}\right)}
\\&+\mathcal{C}^{p+1}(\ln R)T^{\frac{q-1}{q}-1+p}R^{-\frac{1}{2}(\frac{\rho-q-1}{q}+\rho-p-1)}\biggr).\end{split}\end{equation*}
Combining Lemma \ref{LL2} with the last estimates, we deduce that
\begin{equation}\label{E5}\begin{split}
\left[\int_{\partial B_2}g(x)d\sigma\right]^\frac{pq-1}{p}
&\leq C(pq) \biggl(T^{-1-q}+\mathcal{C}^q(\ln R)T^{-1}R^{-\frac{1}{2}(\rho-q-1)}
\\&+\mathcal{C}(\ln R)T^{-q}R^{-\frac{1}{2}\left(\frac{\rho-p-1}{p}\right)}\\&+\mathcal{C}^{q+1}(\ln R)R^{-\frac{1}{2}\left(\frac{\rho-p-1}{p}+\rho-q-1\right)}\biggr)\end{split}\end{equation}
and
\begin{equation}\label{E6}\begin{split}
\left[\int_{\partial B_2}f(x)d\sigma\right]^\frac{pq-1}{q}
&\leq C(pq) \biggl(T^{-1-p}+\mathcal{C}^p(\ln R)T^{-1}R^{-\frac{1}{2}(\rho-p-1)}
\\&+\mathcal{C}(\ln R)T^{-p}R^{-\frac{1}{2}\left(\frac{\rho-q-1}{q}\right)}\\&+\mathcal{C}^{p+1}(\ln R)R^{-\frac{1}{2}\left(\frac{\rho-q-1}{q}+\rho-p-1\right)}\biggr).\end{split}\end{equation}

At this stage, we have to consider two cases:
\\$\bullet$ The case $p\geq q$. Taking $R=T^\lambda, \lambda>0$ in \eqref{E5}, we have
\begin{equation}\label{+1}\begin{split}
\left[\int_{\partial B_2}g(x)d\sigma\right]^\frac{pq-1}{p}
&\leq C \biggl(T^{\theta_1}+T^{\theta_2}\mathcal{C}^{q}(\ln T^\lambda)+T^{\theta_3}\mathcal{C}(\ln T^\lambda)
+T^{\theta_4}\mathcal{C}^{q+1}(\ln T^\lambda)\biggr),\end{split}\end{equation}
where
\begin{equation*}\left\{\begin{array}{l}
\theta_1:=-1-q,\\{}\\
\theta_2:=-1+\frac{\lambda}{2}\left(-\rho+q+1\right),\\{}\\
\theta_3:=-q+\frac{\lambda}{2}\left(\frac{-\rho+p+1}{p}\right),\\{}\\
\theta_4:=\frac{\lambda}{2}\left(\frac{-\rho+p+1}{p}-\rho+q+1\right).\\{}\\ \end{array}\right.\end{equation*}
Recalling the identity \eqref{ssT} for $p\geq q$, we deduce that
\begin{equation*}\label{F1}
\theta_4<0\,\,\,\text{for all}\,\,\,\lambda>0.
\end{equation*}
Therefore, taking 
\begin{equation*}
\lambda(-\rho+p+1)<2,
\end{equation*}
we obtain
\begin{equation*}
\theta_2\leq-1+\frac{\lambda}{2}\left(-\rho+p+1\right)<0.
\end{equation*}
Similarly, it holds true
\begin{equation*}
p\theta_3:=-pq+\frac{\lambda}{2}\left(-\rho+p+1\right)<0.
\end{equation*}
Finally, passing to the limit as $T\to\infty$ in \eqref{+1}, we get a contradiction with \eqref{ss}.
\\$\bullet$ The case $p<q$. Taking $R=T^\lambda, \lambda>0$ in \eqref{E6}, it follows that
\begin{equation}\label{+2}\begin{split}
\left[\int_{\partial B_2}f(x)d\sigma\right]^\frac{pq-1}{q}
&\leq C \biggl(T^{\mu_1}+T^{\mu_2}\mathcal{C}^{p}(\ln T^\lambda)+T^{\mu_3}\mathcal{C}(\ln T^\lambda)
+T^{\mu_4}\mathcal{C}^{p+1}(\ln T^\lambda)\biggr),\end{split}\end{equation}
where
\begin{equation*}\left\{\begin{array}{l}
\mu_1:=-1-p,\\{}\\
\mu_2:=-1+\frac{\lambda}{2}\left(-\rho+p+1\right),\\{}\\
\mu_3:=-p+\frac{\lambda}{2}\left(\frac{-\rho+q+1}{q}\right),\\{}\\
\mu_4:=\frac{\lambda}{2}\left(\frac{-\rho+q+1}{q}-\rho+p+1\right).\\{}\\ \end{array}\right.\end{equation*}
From \eqref{ssT} for $p<q$, we get
\begin{equation*}
\mu_4<0\,\,\,\text{for all}\,\,\,\lambda>0.
\end{equation*}
Next, taking
\begin{equation*}
\lambda(-\rho+q+1)<2,
\end{equation*}
we conclude that
\begin{equation*}
\mu_2\leq-1+\frac{\lambda}{2}\left(-\rho+q+1\right)<0.
\end{equation*}
Moreover, we can verify the following inequality holds
\begin{equation*}
q\mu_3:=-pq+\frac{\lambda}{2}\left(-\rho+q+1\right)<0.
\end{equation*}
Hence, passing to the limit as $T\to\infty$ in \eqref{+2}, we get a contradiction with \eqref{ss}.
\\\textbf{(i) The problem \eqref{SP1}  with the Dirichlet boundary condition \eqref{SP4}}.
 We proceed as in the proof of part $\textbf{(i)}.$ Conequently, from Definition \ref{WS8} with the H\"{o}lder and $\varepsilon$-Young inequality, we obtain
\begin{equation*}\begin{split}
-\int_\Gamma g\frac{\partial\psi}{\partial\nu} d\sigma dt\leq C(pq)
\left[\mathbf{C}(\psi,p)\right]^\frac{p}{pq-1}\left[\mathbf{D}(\psi,q)\right]^\frac{pq}{pq-1}
\end{split}\end{equation*} and
\begin{equation*}\begin{split}
-\int_\Gamma f\frac{\partial\psi}{\partial\nu} d\sigma dt\leq C(pq)
\left[\mathbf{D}(\psi,q)\right]^\frac{q}{pq-1}\left[\mathbf{C}(\psi,p)\right]^\frac{pq}{pq-1},
\end{split}\end{equation*}where
\begin{equation*}\begin{split}
\mathbf{C}(\psi,p):=\left(\mathcal{J}_1(\psi,p)\right)^\frac{p-1}{p}+\left(\mathcal{J}_3(\psi,p)\right)^\frac{p-1}{p},\end{split}\end{equation*}
\begin{equation*}\begin{split}
\mathbf{D}(\psi,q):=\left(\mathcal{J}_1(\psi,q)\right)^\frac{q-1}{q}+\left(\mathcal{J}_3(\psi,q)\right)^\frac{q-1}{q}.\end{split}\end{equation*}
From Lemma \ref{DWQ}, we deduce that
\begin{equation}\label{EA3}\begin{split}
\mathbf{C}(\psi,p)\leq C\left(T^{-\frac{1}{p}}+T^\frac{p-1}{p}\left[\left(\ln R\right)^{-2}R^{-\frac{1}{2}\left(\frac{\rho-p-1}{p}\right)}+\left(\ln R\right)^{-1} R^{-\frac{1}{2}\left(\frac{\rho-1}{p}\right)}\right]\right)\end{split}\end{equation}
and
\begin{equation}\label{EA4}\begin{split}
\mathbf{D}(\psi,q)\leq C\left(T^{-\frac{1}{q}}+T^\frac{q-1}{q}\left[\left(\ln R\right)^{-2}R^{-\frac{1}{2}\left(\frac{\rho-q-1}{q}\right)}+\left(\ln R\right)^{-1} R^{-\frac{1}{2}\left(\frac{\rho-1}{q}\right)}\right]\right). \end{split}\end{equation}
Then, due to the last estimates, we obtain 
\begin{equation*}\label{E30}\begin{split}
\left[\mathbf{C}(\psi,p)\right]\left[\mathbf{D}(\psi,p)\right]^q
&\leq C\left(T^{-\frac{1}{p}}+T^\frac{p-1}{p}\left[\left(\ln R\right)^{-2}R^{-\frac{1}{2}\left(\frac{\rho-p-1}{p}\right)}+\left(\ln R\right)^{-1} R^{-\frac{1}{2}\left(\frac{\rho-1}{p}\right)}\right]\right)
\\&\times\left(T^{-1}+T^{q-1}\left[\left(\ln R\right)^{-2q}R^{-\frac{1}{2}\left({\rho-q-1}\right)}+\left(\ln R\right)^{-q} R^{-\frac{1}{2}\left({\rho-1}\right)}\right]\right)
\\&= C \biggl(T^{-\frac{1}{p}-1}+T^{-\frac{1}{p}+q-1}\left[\left(\ln R\right)^{-2q}R^{-\frac{1}{2}\left({\rho-q-1}\right)}+\left(\ln R\right)^{-q} R^{-\frac{1}{2}\left({\rho-1}\right)}\right]
\\&+T^{\frac{p-1}{p}-1}\left[\left(\ln R\right)^{-2}R^{-\frac{1}{2}\left(\frac{\rho-p-1}{p}\right)}+\left(\ln R\right)^{-1} R^{-\frac{1}{2}\left(\frac{\rho-1}{p}\right)}\right]
\\&+T^{\frac{p-1}{p}+q-1}\biggl[\left(\ln R\right)^{-2-2q}R^{-\frac{1}{2}\left(\frac{\rho-p-1}{p}+{\rho-q-1}\right)}+\left(\ln R\right)^{-2-q}R^{-\frac{1}{2}\left(\frac{\rho-p-1}{p}+{\rho-1}\right)}
 \\&+\left(\ln R\right)^{-1-2q} R^{-\frac{1}{2}\left(\frac{\rho-1}{p}+{\rho-q-1}\right)}+\left(\ln R\right)^{-1-q} R^{-\frac{1}{2}\left(\frac{\rho-1}{p}+{\rho-1}\right)}\biggr]\biggr).\end{split}\end{equation*}
Similarly,
\begin{equation*}\begin{split}
\left[\mathbf{D}(\psi,p)\right]\left[\mathbf{C}(\psi,p)\right]^p
&\leq C\left(T^{-\frac{1}{q}}+T^\frac{q-1}{q}\left[\left(\ln R\right)^{-2}R^{-\frac{1}{2}\left(\frac{\rho-q-1}{q}\right)}+\left(\ln R\right)^{-1} R^{-\frac{1}{2}\left(\frac{\rho-1}{q}\right)}\right]\right)
\\&\times\left(T^{-1}+T^{p-1}\left[\left(\ln R\right)^{-2p}R^{-\frac{1}{2}\left({\rho-p-1}\right)}+\left(\ln R\right)^{-p} R^{-\frac{1}{2}\left({\rho-1}\right)}\right]\right)
\\&= C \biggl(T^{-\frac{1}{q}-1}+T^{-\frac{1}{q}+p-1}\left[\left(\ln R\right)^{-2p}R^{-\frac{1}{2}\left({\rho-p-1}\right)}+\left(\ln R\right)^{-p} R^{-\frac{1}{2}\left({\rho-1}\right)}\right]
\\&+T^{\frac{q-1}{q}-1}\left[\left(\ln R\right)^{-2}R^{-\frac{1}{2}\left(\frac{\rho-q-1}{q}\right)}+\left(\ln R\right)^{-1} R^{-\frac{1}{2}\left(\frac{\rho-1}{q}\right)}\right]
\\&+T^{\frac{q-1}{q}+p-1}\biggl[\left(\ln R\right)^{-2-2p}R^{-\frac{1}{2}\left(\frac{\rho-q-1}{q}+{\rho-p-1}\right)}+\left(\ln R\right)^{-2-p}R^{-\frac{1}{2}\left(\frac{\rho-q-1}{q}+{\rho-1}\right)}
 \\&+\left(\ln R\right)^{-1-2p} R^{-\frac{1}{2}\left(\frac{\rho-1}{q}+{\rho-p-1}\right)}+\left(\ln R\right)^{-1-p} R^{-\frac{1}{2}\left(\frac{\rho-1}{q}+{\rho-1}\right)}\biggr]\biggr).\end{split}\end{equation*}

In view of Lemma \ref{LS90} and the last estimates, we arrive at
\begin{equation}\label{E8}\begin{split}
\left[\int_{\partial B_2}g(x)d\sigma\right]^\frac{pq-1}{p}
&\leq C\biggl(T^{-1-q}+T^{-1}\left[\left(\ln R\right)^{-2q}R^{-\frac{1}{2}\left({\rho-q-1}\right)}+\left(\ln R\right)^{-q} R^{-\frac{1}{2}\left({\rho-1}\right)}\right]
\\&+T^{-q}\left[\left(\ln R\right)^{-2}R^{-\frac{1}{2}\left(\frac{\rho-p-1}{p}\right)}+\left(\ln R\right)^{-1} R^{-\frac{1}{2}\left(\frac{\rho-1}{p}\right)}\right]
\\&+\left(\ln R\right)^{-2-2q}R^{-\frac{1}{2}\left(\frac{\rho-p-1}{p}+{\rho-q-1}\right)}+\left(\ln R\right)^{-2-q}R^{-\frac{1}{2}\left(\frac{\rho-p-1}{p}+{\rho-1}\right)}
 \\&+\left(\ln R\right)^{-1-2q} R^{-\frac{1}{2}\left(\frac{\rho-1}{p}+{\rho-q-1}\right)}+\left(\ln R\right)^{-1-q} R^{-\frac{1}{2}\left(\frac{\rho-1}{p}+{\rho-1}\right)}\biggr)\end{split}\end{equation}
and
\begin{equation}\label{E9}\begin{split}
\left[\int_{\partial B_2}f(x)d\sigma\right]^\frac{pq-1}{q}
&\leq C\biggl(T^{-1-p}+T^{-1}\left[\left(\ln R\right)^{-2p}R^{-\frac{1}{2}\left({\rho-p-1}\right)}+\left(\ln R\right)^{-p} R^{-\frac{1}{2}\left({\rho-1}\right)}\right]
\\&+T^{-p}\left[\left(\ln R\right)^{-2}R^{-\frac{1}{2}\left(\frac{\rho-q-1}{q}\right)}+\left(\ln R\right)^{-1} R^{-\frac{1}{2}\left(\frac{\rho-1}{q}\right)}\right]
\\&+\left(\ln R\right)^{-2-2p}R^{-\frac{1}{2}\left(\frac{\rho-q-1}{q}+{\rho-p-1}\right)}+\left(\ln R\right)^{-2-p}R^{-\frac{1}{2}\left(\frac{\rho-q-1}{q}+{\rho-1}\right)}
 \\&+\left(\ln R\right)^{-1-2p} R^{-\frac{1}{2}\left(\frac{\rho-1}{q}+{\rho-p-1}\right)}+\left(\ln R\right)^{-1-p} R^{-\frac{1}{2}\left(\frac{\rho-1}{q}+{\rho-1}\right)}\biggr).\end{split}\end{equation}
Next, we have to discuss two cases.
\\$\bullet$ The case $p\geq q$. Choosing $R=T^\lambda, \lambda>0$ in \eqref{E8}, we obtain
\begin{equation*}\begin{split}
\left[\int_{\partial B_2}g(x)d\sigma\right]^\frac{pq-1}{p}
&\leq C \biggl(T^{\eta_1}+T^{\eta_2}\left(\ln T^\lambda\right)^{-2q}+T^{\eta_3}\left(\ln T^\lambda\right)^{-q} 
\\&+T^{\eta_4}\left(\ln T^\lambda\right)^{-2}+T^{\eta_5}\left(\ln T^\lambda\right)^{-1}
\\&+T^{\eta_6}\left(\ln T^\lambda\right)^{-2-2q}+T^{\eta_7}\left(\ln T^\lambda\right)^{-2-q}
\\&+T^{\eta_8}\left(\ln T^\lambda\right)^{-1-2q}+T^{\eta_9}\left(\ln T^\lambda\right)^{-1-q} \biggr),\end{split}\end{equation*}
where
\begin{equation*}\left\{\begin{array}{l}
\eta_1:=-(1+q),\,\,\,\,\,\,\,\, \eta_2:=-1-\frac{\lambda}{2}(\rho-q-1),\,\,\,\ \eta_3:=-1-\frac{\lambda}{2}(\rho-1), \\{}\\
\eta_4:=-q-\frac{\lambda}{2}\left(\frac{\rho-p-1}{p}\right),\,\,\,\eta_5:=-q-\frac{\lambda}{2}\left(\frac{\rho-1}{p}\right),\\{}\\
\eta_6:=-\frac{\lambda}{2}\left(\frac{\rho-p-1}{p}+{\rho-q-1}\right),\,\,\,\, \eta_7:=-\frac{\lambda}{2}\left(\frac{\rho-p-1}{p}+{\rho-1}\right),\\{}\\ \eta_8:=-\frac{\lambda}{2}\left(\frac{\rho-1}{p}+{\rho-q-1}\right),\,\,\,\,\,\eta_9:=-\frac{\lambda}{2}\left(\frac{\rho-1}{p}+{\rho-1}\right).
\end{array}\right.\end{equation*}
In view of \eqref{ssT}, we deduce that
$$\eta_1, \eta_3, \eta_5, \eta_6, \eta_7, \eta_8, \eta_9<0 \,\,\,\text{for all}\,\,\,\lambda>0.$$
Next, taking
\begin{equation*}
\lambda(-\rho+q+1)<2,
\end{equation*}
and noting that $p\geq q$, we obtain
\begin{equation*}
\eta_2\leq-1+\frac{\lambda}{2}\left(-\rho+q+1\right)<0
\end{equation*}and
\begin{equation*}
p\eta_4\leq-pq+\frac{\lambda}{2}\left(-\rho+q+1\right)<0.
\end{equation*}
Then, passing to the limit as $T\to\infty$, we get a contradiction with \eqref{ss}.
\\$\bullet$ The case $p<q$. Moreover, taking $R=T^\lambda, \lambda>0$ in \eqref{E9}, we have
\begin{equation*}\begin{split}
\left[\int_{\partial B_2}f(x)d\sigma\right]^\frac{pq-1}{q}
&\leq C \biggl(T^{\nu_1}+T^{\nu_2}\left(\ln T^\lambda\right)^{-2p}+T^{\nu_3}\left(\ln T^\lambda\right)^{-p}
\\&+T^{\nu_4}\left(\ln T^\lambda\right)^{-2}+T^{\nu_5}\left(\ln T^\lambda\right)^{-1}
\\&+T^{\nu_6}\left(\ln T^\lambda\right)^{-2-2p}+T^{\nu_7}\left(\ln T^\lambda\right)^{-2-p}
\\&+T^{\nu_8}\left(\ln T^\lambda\right)^{-1-2p}+T^{\nu_9}\left(\ln T^\lambda\right)^{-1-p} \biggr),\end{split}\end{equation*}
where
\begin{equation*}\left\{\begin{array}{l}
\nu_1:=-(1+p),\qquad\qquad\qquad\,\, \nu_2:=-1-\frac{\lambda}{2}(\rho-p-1),\\{}\\ \nu_3:=-1-\frac{\lambda}{2}(\rho-1),\qquad\qquad \nu_4:=-p-\frac{\lambda}{2}\left(\frac{\rho-q-1}{q}\right),\\{}\\ \nu_5:=-p-\frac{\lambda}{2}\left(\frac{\rho-1}{q}\right),\qquad\qquad\nu_6:=-\frac{\lambda}{2}\left(\frac{\rho-q-1}{q}+{\rho-p-1}\right),\\{}\\ \nu_7:=-\frac{\lambda}{2}\left(\frac{\rho-q-1}{q}+{\rho-1}\right)
,\quad\,\nu_8:=-\frac{\lambda}{2}\left(\frac{\rho-1}{q}+{\rho-p-1}\right),\\{}\\ \nu_9:=-\frac{\lambda}{2}\left(\frac{\rho-1}{q}+{\rho-1}\right).
\end{array}\right.\end{equation*}
Finally, acting in the same way as in the previous case, we get a contradiction with \eqref{ss}, which completes the proof.

\textbf{(II).} The proof of the second part will be similar to the first case, just instead of $$\mathcal{I}_1(\varphi,p),\mathcal{I}_1(\varphi,q)$$ and $$\mathcal{J}_1(\psi,p), \mathcal{J}_1(\psi,q)$$ will replace by $$\mathcal{I}_2(\varphi,p),\mathcal{I}_2(\varphi,q)$$ and $$\mathcal{J}_2(\psi,p), \mathcal{J}_2(\psi,q),$$ respectively. Thus, we simply omit it.
\end{proof}

\section*{\large Acknowledgments}
This research has been funded by the Science Committee of the Ministry of Education and Science of the Republic of Kazakhstan (AP14972726). BT is also supported by the FWO Odysseus 1 grant G.0H94.18N: Analysis and Partial Differential Equations and by the Methusalem programme of the Ghent University Special Research Fund (BOF) (Grant number 01M01021).

\end{document}